\DeclareRobustCommand{\eins}{%
  \text{\usefont{U}{bbold}{m}{n}1}%
}
\newcommand{\indep}{\rotatebox[origin=c]{90}{$\models$}}
\newlength{\drop}
\theoremstyle{plain}
\newtheorem{theorem}{Theorem}[section]
\newtheorem{definition}[theorem]{Definition} 
\newtheorem{claim}[theorem]{Claim}
\newtheorem{lemma}[theorem]{Lemma}
\DeclareMathOperator*{\arginf}{arg\,inf}
\renewcommand{\thefootnote}{\fnsymbol{footnote}}
\renewcommand*{\thefootnote}{\arabic{footnote}}
\begin{document}
  \begin{titlepage}
    \drop=0.1\textheight
    \centering
    \vspace*{\baselineskip}
    \rule{\textwidth}{1.6pt}\vspace*{-\baselineskip}\vspace*{2pt}
    \rule{\textwidth}{0.4pt}\\[\baselineskip]
    {\LARGE A Brief on Optimal Transport}\\[0.2\baselineskip]
    \rule{\textwidth}{0.4pt}\vspace*{-\baselineskip}\vspace{3.2pt}
    \rule{\textwidth}{1.6pt}\\[\baselineskip]
    \scshape
    Special Lecture\\
    Missouri S\&T : Rolla, MO
    \vspace*{2\baselineskip}\\
    Presentation by \\[\baselineskip]
    {\Large Austin G. Vandegriffe\par}
    \vfill
    {\scshape 2020} \\
  \end{titlepage}
  
\section*{Ackowledgement}
    I want to recognize my friend and colleague, Louis Steinmeister\footnote{Louis Steinmeister: \href{https://scholar.google.com/citations?user=BFSEaMkAAAAJ&hl=en}{ https://scholar.google.com/citations?user=BFSEaMkAAAAJ\&hl=en}}, for introducing and motivating my studies in pure mathematics. Without Louis, I likely would not have discovered the intricacies of measure theory, which lead me to optimal transport, for a while longer. I would also like to thank Dr. Jason C. Murphy\footnote{Jason Murphy: \href{https://scholar.google.com/citations?user=32q4x_cAAAAJ&hl=en}{https://scholar.google.com/citations?user=32q4x{\textunderscore}cAAAAJ\&hl=en}} for guiding me through some proofs and for the sanity checks; his input was of great value.
    
    \vfill

\break
 
\pagenumbering{gobble}
\renewcommand{\thefootnote}{\fnsymbol{footnote}}
\begin{center}
    \vspace*{\stretch{1}}
    \Huge{\textbf{Preliminaries}}\footnote[1]{This section has not been developed in detail as the focus of this text is optimal transport. The details are left as a future work.}
    \vspace*{\stretch{1}}
\end{center}
\renewcommand*{\thefootnote}{\arabic{footnote}}
\setcounter{footnote}{0}
\break
\pagenumbering{arabic}
  \section{Topology}
    
    \begin{definition}[ Topology, Open/Closed Sets, and Metrizablility]\label{Ch1:Topology} \label{Ch1:Metric}
        Let $\Omega$ be a non-empty set, then a collection $\tau \subseteq 2^{\Omega}$ is a \emph{topology} if
        \begin{enumerate}[i.]
            \item $\emptyset, \Omega \in \tau$
            \item $T_{1},T_{2} \in \tau ~\implies T_{1}\cap T_{2} \in \tau$
            \item  $(T_{i})_{i\in\mathcal{I}\subset\mathbb{R}} \subset \tau \implies \bigcup\limits_{i\in\mathcal{I}} T_{i} \in \tau$
        \end{enumerate}
        Sets in $\tau$ are called \emph{open}, if $T^{c}\in\tau$ then $T$ is closed, and if $T$ is both open and closed it is \emph{clopen}. If $\exists$ a metric on $\Omega$ which induces the topology, then $(\Omega,\tau)$ is called metrizable, that is, $\exists d:\Omega\times\Omega \rightarrow \mathbb{R}$ satisfying $\forall~x_{i}\in\Omega ~(i=1,2,3)$
        \begin{enumerate}[i.]
            \item $d(x_{1},x_{2}) \geqq 0$ with $d(x_{1},x_{2}) = 0 ~\iff~ x_{1}=x_{2}$
            \item $d(x_{1},x_{2}) = d(x_{2},x_{1})$
            \item $d(x_{1},x_{3}) \leqq d(x_{1},x_{2}) + d(x_{2},x_{3})$
        \end{enumerate}
        and $\tau = \{~ \{ x\in\Omega : d(x_{0},x) < r) \} : r \in \overline{\mathbb{R}}_{0}^{+} \emph{~\&~} x_{0} \in \Omega ~\}$.
    \end{definition}
    
    \begin{definition}[Hausdorff Condition]
        $(\Omega, \tau)$ is called \emph{Hausdorff} if $\forall~x_{1}\neq x_{2} \in \Omega ~\exists U_{i}\in\tau\left|_{\ni x_{i}} \right. ~s.t.~ U_{x_{1}}\bigcap U_{x_{2}} = \emptyset$.
    \end{definition}
    
    \begin{definition}[Polish Space]
        A topological space $(\Omega,\tau)$ is Polish if it is a separable, completely metrizable, topological space, that is, a space homeomorphic to a complete metric space that has a countable dense subset.
    \end{definition}
  
    \begin{definition}[Lower Semicontinuity]
        Let $(\Omega, \tau)$ be a topological space, a function $f:\Omega \rightarrow \mathbb{R}$ is lower semicontinuous if one of the following holds
        \begin{enumerate}[i.]
            \item $\{ x : f(x) > \alpha \} \in \tau ~~\forall~\alpha\in\mathbb{R}$
            \item $\{ x : f(x) \leqq \alpha \}^{c} \in \tau ~~\forall~\alpha\in\mathbb{R}$
            \item If $\tau$ is metrized by $d$, $\forall~(x_{0}\in\Omega,\epsilon > 0) ~\exists~\delta(x_{0},\epsilon)=\delta > 0$ s.t. 
            \[
                d(x_{0},x) < \delta \implies f(x) > f(x_{0}) - \epsilon
            \]
        \end{enumerate}
    \end{definition}
    
    \begin{definition}[Limit Points and Convergence]
    A point $x^{*}$ is a limit point of $A\subset\Omega$ if $\forall U\in\tau\left|_{\ni x^{*}}\right. ~A\bigcap U \neq \emptyset$.
    A sequence $(x_{n})_{n\geqq 1}$ \emph{converges} to a point $x^{*}\in\Omega$ if $\forall~ U\in\tau\left|_{\ni x^{*}}\right. ~\exists N\in\mathbb{N}$ s.t. $x_{n}\in U ~\forall n \geqq N$; if $\tau$ is metrized by $d$, then the former can be formulated as
    \[
        \forall~\epsilon>0 ~\exists N\in\mathbb{N} ~s.t.~ d(x^{*},x_{n}) < \epsilon ~\forall~ n \geqq N
    \]
    
    \end{definition}
\break
    \begin{theorem}[Equivalence of Compactness (Theorem 28.2, pg. 179 \cite{Munkres2000})]
        If a set $\mathcal{C}\subset\Omega$ is compact, then the following are equivalent
        \begin{enumerate}[i.]
            \item $\forall \left(~\mathcal{I}\subseteq\mathbb{R}, (T_{i})_{i\in\mathcal{I}}\subseteq\tau\right) ~\text{s.t.}~ \mathcal{C}\subseteq \textstyle{\bigcup\limits_{i\in\mathcal{I}}} T_{i} ~\exists (C_{n})_{n=1}^{N < \infty} \subseteq (T_{i})_{i\in\mathcal{I}} ~\text{s.t.}~ \mathcal{C} \subseteq \textstyle{\bigcup\limits_{n=1}^{N}}C_{i}$
            
            \item $\forall~ (x_{n})_{n\geqq 1}\subseteq\mathcal{C} ~\exists (x_{n_{k}})_{k\geqq1}\subseteq (x_{n})_{n\geqq 1} \emph{~\&~} x\in\mathcal{C}$ s.t. $x_{n_{k}} \xrightarrow{k\uparrow\infty} x$
            
            \item Every infinite subset of $\mathcal{C}$ has a limit point
        \end{enumerate}
    \end{theorem}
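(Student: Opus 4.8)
The plan is to prove the cycle of implications (i) $\Rightarrow$ (iii) $\Rightarrow$ (ii) $\Rightarrow$ (i). Since the ambient space is metrizable (implicit in the citation to Munkres 28.2 and in the surrounding development, and genuinely needed: without it the reverse implications fail), the metric $d$ is available throughout, which is what powers the last two steps. Note also that the hypothesis ``$\mathcal{C}$ is compact'' should be read as the standing assumption that $\Omega$ is metrizable together with the claim that (i)--(iii) are then equivalent characterizations of this property.

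For (i) $\Rightarrow$ (iii) I would argue by contraposition, using only the topology. Suppose an infinite $A \subseteq \mathcal{C}$ has no limit point in $\mathcal{C}$. Then every $x \in \mathcal{C}$ admits an open $U_{x} \ni x$ with $U_{x} \cap A \subseteq \{x\}$. The family $(U_{x})_{x \in \mathcal{C}}$ is an open cover of $\mathcal{C}$, so by (i) it has a finite subcover $U_{x_{1}}, \dots, U_{x_{N}}$, whence $A \subseteq \bigcup_{n=1}^{N}(U_{x_{n}} \cap A) \subseteq \{x_{1}, \dots, x_{N}\}$, contradicting that $A$ is infinite.

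For (iii) $\Rightarrow$ (ii), take any sequence $(x_{n}) \subseteq \mathcal{C}$. If its range $\{x_{n} : n \geq 1\}$ is finite, some value recurs infinitely often and gives a constant, hence convergent, subsequence. Otherwise the range is an infinite subset of $\mathcal{C}$ and so has a limit point $x \in \mathcal{C}$; here I would use that in a metric (even just $T_{1}$) space every neighborhood of a limit point of a set meets that set in infinitely many points, so one can inductively pick indices $n_{1} < n_{2} < \cdots$ with $d(x, x_{n_{k}}) < 1/k$, and then $x_{n_{k}} \to x$.

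The main obstacle is (ii) $\Rightarrow$ (i), which I would split into two lemmas that each invoke sequential compactness. First, a Lebesgue-number lemma: given an open cover $(T_{i})_{i \in \mathcal{I}}$ of $\mathcal{C}$, if no $\delta > 0$ had the property that every subset of diameter $< \delta$ lies in some $T_{i}$, then for each $n$ choose a set $S_{n}$ of diameter $< 1/n$ contained in no $T_{i}$ and a point $x_{n} \in S_{n}$; a subsequence $x_{n_{k}} \to x$ lies eventually in a ball $B(x,\epsilon) \subseteq T_{i}$ for suitable $i$, and then $S_{n_{k}} \subseteq B(x,\epsilon)$ for large $k$, a contradiction. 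Second, total boundedness: if for some $\epsilon > 0$ no finite union of $\epsilon$-balls covered $\mathcal{C}$, one builds greedily a sequence with pairwise distances $\geq \epsilon$, which has no Cauchy, hence no convergent, subsequence. Combining, cover $\mathcal{C}$ by finitely many balls of radius less than one third of a Lebesgue number $\delta$, so each has diameter $< \delta$ and thus sits inside some $T_{i}$; the corresponding finitely many $T_{i}$ cover $\mathcal{C}$. I expect the fiddliest point to be the bookkeeping in the Lebesgue-number step (selecting the diameters $1/n$, and handling degenerate $S_{n}$), though it is conceptually routine.
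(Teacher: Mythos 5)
Your proof is correct and is precisely the standard argument of Munkres' Theorem 28.2, which the paper cites in lieu of giving a proof: the cycle (i) $\Rightarrow$ (iii) $\Rightarrow$ (ii) $\Rightarrow$ (i), with the Lebesgue-number and total-boundedness lemmas carrying the last implication, and metrizability invoked exactly where it is genuinely needed. The only caveat worth flagging is that the paper's own definition of ``limit point'' omits the puncturing of the neighborhood (under it every point of $A$ is vacuously a limit point, making (iii) trivial); your argument correctly uses the standard punctured definition, which is the one the equivalence actually requires.
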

    
    \begin{theorem}[Tychonoff's Compactness (Theorem 37.3, pg. 234 \cite{Munkres2000})] \label{Ch1:Tychonoff}
        An arbitrary product of compact spaces is compact in the product topology.
    \end{theorem}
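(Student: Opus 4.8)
The plan is to prove compactness through the \emph{finite intersection property} (FIP) rather than directly through open covers, since the FIP formulation interacts cleanly with products. Recall, by De Morgan duality applied to the open-cover formulation (item i of the preceding Equivalence-of-Compactness theorem), that a space $X$ is compact if and only if every collection $\mathcal{A}\subseteq 2^{X}$ with the FIP --- i.e. every finite subcollection has nonempty intersection --- satisfies $\bigcap_{A\in\mathcal{A}}\overline{A}\neq\emptyset$. So, writing $X=\prod_{\alpha\in\mathcal{I}}X_{\alpha}$ with each $X_{\alpha}$ compact and $\pi_{\alpha}:X\to X_{\alpha}$ the coordinate projections, I fix a collection $\mathcal{A}\subseteq 2^{X}$ with the FIP and aim to produce a point of $\bigcap_{A\in\mathcal{A}}\overline{A}$.

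First I would invoke Zorn's Lemma on the poset of all FIP collections containing $\mathcal{A}$, ordered by inclusion --- the union of a chain of FIP collections is again FIP, so Zorn applies --- to obtain a maximal FIP collection $\mathcal{D}\supseteq\mathcal{A}$. Maximality forces two structural properties that I would establish next: (a) every finite intersection of members of $\mathcal{D}$ again lies in $\mathcal{D}$ (otherwise adjoining it preserves the FIP, contradicting maximality); and (b) every set $B\subseteq X$ that meets each member of $\mathcal{D}$ must itself lie in $\mathcal{D}$ (again, adjoining it preserves the FIP, now using (a)).

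Next, for each coordinate $\alpha$, I would push $\mathcal{D}$ forward: since $\pi_{\alpha}(D_{1}\cap\cdots\cap D_{n})\subseteq\pi_{\alpha}(D_{1})\cap\cdots\cap\pi_{\alpha}(D_{n})$ and the left-hand side is nonempty by the FIP of $\mathcal{D}$, the family $\{\overline{\pi_{\alpha}(D)}:D\in\mathcal{D}\}$ consists of closed subsets of $X_{\alpha}$ with the FIP; compactness of $X_{\alpha}$ then yields a point $x_{\alpha}\in\bigcap_{D\in\mathcal{D}}\overline{\pi_{\alpha}(D)}$. Set $x=(x_{\alpha})_{\alpha\in\mathcal{I}}$. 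To finish I would show $x\in\overline{D}$ for every $D\in\mathcal{D}$, hence for every $A\in\mathcal{A}$. Take a subbasic neighborhood $\pi_{\alpha}^{-1}(U_{\alpha})$ of $x$; since $x_{\alpha}\in\overline{\pi_{\alpha}(D)}$ for all $D$, the open set $U_{\alpha}$ meets $\pi_{\alpha}(D)$, so $\pi_{\alpha}^{-1}(U_{\alpha})$ meets every $D\in\mathcal{D}$, whence $\pi_{\alpha}^{-1}(U_{\alpha})\in\mathcal{D}$ by (b). A general basic neighborhood of $x$ is a finite intersection of such sets, so it lies in $\mathcal{D}$ by (a), hence meets every member of $\mathcal{D}$ by the FIP of $\mathcal{D}$. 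Thus every neighborhood of $x$ meets every $D$, i.e. $x\in\bigcap_{D\in\mathcal{D}}\overline{D}\subseteq\bigcap_{A\in\mathcal{A}}\overline{A}$, as required.

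\textbf{Main obstacle.} The crux is the maximal-FIP-collection step together with properties (a)--(b): this is precisely where the axiom of choice enters (Tychonoff's theorem is in fact equivalent to it over ZF), and it is the only genuinely non-routine ingredient --- everything afterward is bookkeeping with subbasic open sets in the product topology. An alternative route that hides the Zorn argument would replace maximal FIP collections by ultrafilters (or universal nets): a space is compact iff every ultrafilter converges, projections carry ultrafilters to ultrafilters, each projected ultrafilter converges by compactness of $X_{\alpha}$, and the tuple of limits is a limit of the original ultrafilter on $X$; but this merely relocates the same appeal to choice into the existence of the required ultrafilters.
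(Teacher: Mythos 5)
Your argument is correct and is precisely the proof the paper implicitly defers to: the statement is left unproved here and cited to Munkres, Theorem 37.3, whose proof is exactly your route via a Zorn-maximal collection with the finite intersection property, its closure properties (a)--(b), coordinatewise limits from compactness of each $X_{\alpha}$, and verification on subbasic then basic neighborhoods. No gaps; your closing remark about the equivalence with the axiom of choice and the ultrafilter reformulation is also accurate.
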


\break
  \section{Measure Theory}
	\begin{definition}[$\sigma$-Algebra] Let $\Omega$ be a non-empty set. A collection $\mathcal{F}\subset2^{\Omega}$ is a $\sigma$-algebra if
	\begin{enumerate}[i.]
	    \item $\Omega \in \mathcal{F}$
	    \item $A \in \mathcal{F} \implies A^{c} \in\mathcal{F}$
	    \item $(A_{i})_{i\geqq1}\subset\mathcal{F} \implies \displaystyle\bigcup\limits_{i\geqq1}A_{i} \in \mathcal{F}$
	\end{enumerate}
	\end{definition}

	\begin{definition}[Measurable Maps / Random Variables]
        A map $X : (\Omega, \mathcal{F}) \rightarrow (\Omega', \mathcal{F}')$ is called \emph{measurable} if
        \[
            X^{-1}(\mathcal{F}') \subset \mathcal{F}
        \]
	\end{definition}
	
	\begin{definition}[Measure]
        Let $(\Omega, \mathcal{F})$ be a measureable space, the set function $\mu:\mathcal{F}\rightarrow \mathbb{R}_{0}^{+} \cup\{\infty\}$ is a measure if
        \begin{enumerate}[i.]
            \item $\mu(\empty) = 0$
            \item $\mu(\biguplus A_{i}) = \sum\limits_{i\geqq1}\mu(A_{i})$
        \end{enumerate}
	\end{definition}
	
	\begin{definition}[Image Measure / Distribution]
        Let $X : (\Omega, \mathcal{F},\mathbb{P}) \rightarrow (\Omega', \mathcal{F}')$ be a random variable, then we can endow $(\Omega', \mathcal{F}')$ with the \emph{distribution}, often called the \emph{image measure}, $\mu=X_{\#}\mathbb{P} = \mathbb{P}\circ X^{-1}$
	\end{definition}
	
	\begin{theorem}[Eq. 1.2 pg. 18 in \cite{Villani2003}]\label{Ch1:EqualMeasuresViaBCF}
	    Let $(\Omega, \mathcal{F})$ be a measurable Polish space and $\mu_{i} \in \mathscr{P}(\mathcal{F}) ~(i=1,2)$, then $\mu_{1}=\mu_{2}$ if 
	    \[
	        \int\limits_{\Omega} \phi ~d\mu_{1} = \int\limits_{\Omega} \phi ~d\mu_{2} ~~~~\forall \phi\in\mathcal{C}_{b}^{0}(\Omega)
	    \]
	\end{theorem}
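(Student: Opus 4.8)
The plan is to deduce equality of the two measures from equality on the closed sets, and then to upgrade this to equality on all of $\mathcal{F}$ by a uniqueness-of-measures argument; here I read ``measurable Polish space'' as $(\Omega,\tau)$ Polish with $\mathcal{F}$ its Borel $\sigma$-algebra. Since $(\Omega,\tau)$ is metrizable, fix a metric $d$ inducing $\tau$. For a nonempty closed set $C\subseteq\Omega$, put $d(x,C):=\inf_{y\in C}d(x,y)$, which is $1$-Lipschitz in $x$ and hence continuous, and for $n\geqq 1$ define
\[
    f_{n}(x):=\bigl(1-n\,d(x,C)\bigr)^{+}=\max\{0,\,1-n\,d(x,C)\}.
\]
Then $0\leqq f_{n}\leqq 1$ and $f_{n}$ is continuous, so $f_{n}\in\mathcal{C}_{b}^{0}(\Omega)$. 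First I would check the pointwise limit: $f_{n}\equiv 1$ on $C$, while if $x\notin C$ then closedness gives $d(x,C)>0$, so $f_{n}(x)=0$ for $n$ large; hence $f_{n}\downarrow\eins_{C}$ on $\Omega$.

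Next I would pass to the limit in the hypothesis. Since each $\mu_{i}$ is a probability measure and $0\leqq f_{n}\leqq 1$, dominated convergence gives $\int_{\Omega}f_{n}\,d\mu_{i}\to\mu_{i}(C)$ as $n\uparrow\infty$ for $i=1,2$. The hypothesis says $\int_{\Omega}f_{n}\,d\mu_{1}=\int_{\Omega}f_{n}\,d\mu_{2}$ for every $n$, so letting $n\uparrow\infty$ yields $\mu_{1}(C)=\mu_{2}(C)$ for every closed $C\subseteq\Omega$ (the case $C=\emptyset$ being trivial).

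Finally, observe that the family $\mathcal{D}$ of closed subsets of $\Omega$ is a $\pi$-system (closed under finite intersections) and that $\sigma(\mathcal{D})=\mathcal{F}$. Since $\mu_{1},\mu_{2}$ are probability measures with $\mu_{1}(\Omega)=\mu_{2}(\Omega)=1$ that agree on $\mathcal{D}$, Dynkin's $\pi$-$\lambda$ theorem forces $\mu_{1}=\mu_{2}$ on $\sigma(\mathcal{D})=\mathcal{F}$. The only place any real care is needed is the construction of the continuous approximants $f_{n}\downarrow\eins_{C}$ — this is exactly where metrizability of the Polish space is invoked; the rest is a routine application of dominated convergence together with the standard uniqueness theorem for measures.
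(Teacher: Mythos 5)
Your proof is correct and follows essentially the same route as the paper's: both approximate the indicator $\eins_{C}$ of a closed set by bounded continuous functions built from the distance function $d(\cdot,C)$, pass to the limit to conclude $\mu_{1}(C)=\mu_{2}(C)$ for every closed $C$, and then extend to all of $\mathcal{F}$. The only divergence is the final step, where you invoke Dynkin's $\pi$--$\lambda$ theorem on the $\pi$-system of closed sets while the paper gestures at a regularity argument that it leaves unfinished; your conclusion is the cleaner and more complete of the two.
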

	\begin{proof}
        Assume that $\left(X,d\right)$ is a metric space and let $F$ be a closed subset of $X$. For $S\subset X$ and $x\in S$, define $d(x,S):=\inf\{d(x,y),y\in S\}$. Let $O_n:=\left\{x\in X,d(x,F)<n^{-1}\right\}$. Then $O_n$ is open and the map $$f_n\colon x\mapsto \frac{d\left(x,X\setminus O_n\right)}{d\left(x,X\setminus O_n\right)+d\left(x,F\right)}$$ is continuous and bounded. It converges pointwise and monotonically to the characteristic function of $F$. So we get by monotone convergence that $\mu(F)=\nu(F)$ for all closed set $F$. Now given a Borel set $B$ and $\varepsilon>0$, we can find a closed set $F$ and an open set $O$ such that $F\subset S\subset O$ and $\mu(O\setminus S)\leqslant\varepsilon$.
	\end{proof}

	\begin{definition}[Tightness]
        A family of finite measures $\mathcal{P}\subset\mathscr{P}_{f}(\mathcal{F})$ is called tight if
        \[
            \forall \epsilon>0 ~ \exists ~\text{compact}~ \mathcal{K}\subset\Omega ~\text{s.t.}~ \sup\limits_{\mu\in\mathcal{P}}(\mu(\Omega\setminus\mathcal{K})) < \epsilon
        \]
	\end{definition}

    \begin{lemma}[Ulam’s theorem]\label{Ch1:Ulam}
        If $(\Omega, \tau)$ is a Polish space, and $(\Omega, \mathcal{B}(\tau), \mu)$ is a probability space, then $\mu$ is tight.
    \end{lemma}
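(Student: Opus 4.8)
The plan is to exploit the two defining features of a Polish space — a complete compatible metric $d$ and a countable dense subset $D=\{x_n : n\geqq 1\}$ — to manufacture, for a prescribed $\epsilon>0$, a single compact set carrying all but $\epsilon$ of the mass. The organizing principle is the metric fact that \emph{in a complete metric space a set is compact iff it is closed and totally bounded}; so it suffices to build a closed, totally bounded Borel set of $\mu$-measure at least $1-\epsilon$.

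First I would fix $\epsilon>0$ and, for each $k\geqq 1$, note that the closed balls $\bar B(x_n,1/k)$, $n\geqq 1$, cover $\Omega$ by density of $D$. Since $\mu$ is a probability measure and the finite unions $\bigcup_{n=1}^{N}\bar B(x_n,1/k)$ increase to $\Omega$ as $N\uparrow\infty$, continuity from below supplies an index $N_k$ with
\[
    \mu\!\left( \bigcup_{n=1}^{N_k} \bar B(x_n,1/k) \right) > 1 - \epsilon\,2^{-k}.
\]
Writing $A_k := \bigcup_{n=1}^{N_k}\bar B(x_n,1/k)$ (a closed, hence Borel, set) I would then set $K := \bigcap_{k\geqq 1} A_k$.

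It remains to check $K$ has the desired properties. It is closed, being an intersection of closed sets, hence complete as a closed subset of the complete space $(\Omega,d)$; and it is totally bounded, since given $\delta>0$ one chooses $k$ with $1/k<\delta$ and observes $K\subseteq A_k$, a union of $N_k<\infty$ balls of radius $<\delta$. Therefore $K$ is compact. Finally a union bound gives
\[
    \mu(\Omega\setminus K) = \mu\!\left( \bigcup_{k\geqq 1}(\Omega\setminus A_k) \right) \leqq \sum_{k\geqq 1}\mu(\Omega\setminus A_k) < \sum_{k\geqq 1}\epsilon\,2^{-k} = \epsilon,
\]
which is exactly tightness for the singleton family $\{\mu\}$; the choice of the summable budget $\epsilon 2^{-k}$ is precisely what keeps the countable intersection large.

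The one step I would be most careful about is the implication ``closed $+$ totally bounded $\Rightarrow$ compact,'' since this genuinely uses completeness of $d$ (equivalently, that the space is Polish and not merely separable metrizable): one proves it by a diagonal extraction producing a Cauchy, hence convergent, subsequence, or by invoking the equivalence of sequential and open-cover compactness recorded earlier in the text. Everything else is bookkeeping — measurability of the balls is automatic, and continuity from below is the only measure-theoretic input.
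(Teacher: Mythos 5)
Your argument is correct and complete; it is the canonical proof of Ulam's theorem. The text itself does not prove the lemma at all --- it simply defers to a reference (Theorem 2.49 in the cited probability notes), so your proposal is not so much a different route as the route the citation hides: choose a complete metric compatible with the topology, use separability and continuity from below to trap mass $1-\epsilon 2^{-k}$ in a finite union $A_k$ of closed $1/k$-balls, intersect over $k$ to get a closed totally bounded (hence, by completeness, compact) set, and close with a union bound over the summable budget $\epsilon 2^{-k}$. All the points you flag as delicate are exactly the right ones: completeness is genuinely needed for ``closed $+$ totally bounded $\implies$ compact'' (the statement fails for, say, $\mathbb{Q}$ with the usual metric), and the geometric series is what keeps the countable intersection large. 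Two cosmetic remarks: since a Polish space is only \emph{completely metrizable}, you should say explicitly that you fix one complete metric $d$ inducing $\tau$ at the outset (compactness is topological, so the choice is immaterial); and in the total-boundedness step the balls $\bar B(x_n,1/k)$ with $1/k<\delta$ are contained in the $\delta$-balls about the same centers, which is all the standard definition requires even though those centers need not lie in $K$. As written, your proof could be substituted for the citation in the text with essentially no change.
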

    \begin{proof}
        Result from measure theory, see Theorem 2.49 in \cite{Orbanz2016}.
    \end{proof}~\\[-2em]

    \begin{lemma}[Prokhorov’s theorem]\label{Ch1:Prokhorov}
        If $(\Omega, \tau)$ is a Polish space, then a set $\mathcal{P}\subset\mathscr{P}(\mathcal{\mathcal{B}(\tau)})$ is precompact for the weak topology if and only if it is tight.
    \end{lemma}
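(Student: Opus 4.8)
The plan is to prove the two implications separately, using throughout the standing fact that, since $(\Omega,\tau)$ is Polish, the space $\mathscr{P}(\mathcal{B}(\tau))$ with the weak topology is itself metrizable; consequently ``precompact'' may be read as ``sequentially precompact,'' so it suffices to argue with sequences of measures and to invoke the Portmanteau-type characterization of weak convergence (weak convergence $\mu_n\to\mu$ is equivalent to $\liminf_n\mu_n(O)\geq\mu(O)$ for open $O$, equivalently $\limsup_n\mu_n(F)\leq\mu(F)$ for closed $F$). This characterization is not stated above, so I would first establish it by the same closed-set/open-set approximation used in the proof of Theorem \ref{Ch1:EqualMeasuresViaBCF}.

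For the direction \emph{tight $\Rightarrow$ precompact}: fix a sequence $(\mu_n)\subset\mathcal{P}$. Since $\Omega$ is separable and metrizable, the Urysohn embedding realizes it homeomorphically as a subset of the Hilbert cube $H=[0,1]^{\mathbb{N}}$, which is compact by Tychonoff's theorem (Theorem \ref{Ch1:Tychonoff}). Transporting the $\mu_n$ along this embedding, regard them as Borel probability measures on the compact metric space $H$; then $\mathscr{P}(H)$ is weak-$*$ compact (Banach--Alaoglu applied to the dual of the separable space $\mathcal{C}(H)$, intersected with the weak-$*$ closed set of probability measures), so a subsequence satisfies $\mu_{n_k}\to\mu$ weakly on $H$. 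It remains to show $\mu$ is carried by $\Omega$ and that the convergence descends to weak convergence in $\mathscr{P}(\mathcal{B}(\tau))$. Given $\varepsilon>0$, tightness yields a compact $K_\varepsilon\subset\Omega$ with $\mu_n(\Omega\setminus K_\varepsilon)<\varepsilon$ for all $n$; as $K_\varepsilon$ is compact, hence closed, in $H$, the closed-set half of Portmanteau gives $\mu(K_\varepsilon)\geq\limsup_k\mu_{n_k}(K_\varepsilon)\geq 1-\varepsilon$. Letting $\varepsilon$ run through $1/m$ shows $\mu$ concentrates on a $\sigma$-compact subset of $\Omega$, so $\mu\in\mathscr{P}(\mathcal{B}(\tau))$. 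Finally, for $f\in\mathcal{C}_b^0(\Omega)$ with $\|f\|_\infty\leq M$, extend $f|_{K_\varepsilon}$ (via Tietze) to $g\in\mathcal{C}(H)$ with $\|g\|_\infty\leq M$; then $|\int f\,d\mu_{n_k}-\int g\,d\mu_{n_k}|\leq 2M\varepsilon$ and likewise for $\mu$, while $\int g\,d\mu_{n_k}\to\int g\,d\mu$, so $\limsup_k|\int f\,d\mu_{n_k}-\int f\,d\mu|\leq 4M\varepsilon$; letting $\varepsilon\downarrow 0$ gives $\mu_{n_k}\to\mu$ in the weak topology on $\mathscr{P}(\mathcal{B}(\tau))$, i.e.\ precompactness.

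For the direction \emph{precompact $\Rightarrow$ tight}: fix a complete metric $d$ inducing $\tau$ and a countable dense set $\{x_i\}_{i\geq1}$. The crux is the claim that for every $\delta>0$ and $\eta>0$ there is a finite $N$ with $\inf_{\mu\in\mathcal{P}}\mu\big(\bigcup_{i\leq N}B(x_i,\delta)\big)>1-\eta$. Granting it, apply the claim with $\delta=1/m$, $\eta=\varepsilon 2^{-m}$ to get finite unions of balls $U_m$ with $\mu(U_m)>1-\varepsilon 2^{-m}$ for all $\mu\in\mathcal{P}$; then $K:=\overline{\bigcap_m U_m}$ is closed and totally bounded, hence compact by completeness, and $\mu(\Omega\setminus K)\leq\sum_m\varepsilon 2^{-m}=\varepsilon$ for every $\mu\in\mathcal{P}$, which is tightness. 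To prove the claim, suppose it fails for some $\delta,\eta$: writing $B_N=\bigcup_{i\leq N}B(x_i,\delta)$, an increasing sequence of open sets with union $\Omega$, there is for each $N$ some $\mu_N\in\mathcal{P}$ with $\mu_N(B_N)\leq 1-\eta$; by precompactness a subsequence $\mu_{N_k}\to\mu$ weakly, with $\mu$ a probability measure on $\Omega$ (being in the compact closure $\overline{\mathcal{P}}$). For fixed $m$, the open-set half of Portmanteau together with $B_m\subseteq B_{N_k}$ for $N_k\geq m$ gives $\mu(B_m)\leq\liminf_k\mu_{N_k}(B_m)\leq\liminf_k\mu_{N_k}(B_{N_k})\leq 1-\eta$; letting $m\to\infty$ yields $1=\mu(\Omega)\leq 1-\eta$, a contradiction.

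The main obstacle I anticipate lies in the first direction: not the construction of the limit on $H$, which is soft, but the two descent steps — showing no mass escapes to $H\setminus\Omega$ (handled by tightness plus the closed-set Portmanteau inequality) and showing weak-$*$ convergence on the compactification restricts to genuine weak convergence on the possibly non-closed subspace $\Omega$ (handled by the Tietze truncation argument). A prerequisite common to both directions is the Portmanteau characterization itself, which must be proved first by approximation as indicated; once it is in hand, and once tightness has been recast as the total-boundedness statement, the remaining arguments are routine.
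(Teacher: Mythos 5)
Your proposal is correct in outline, but note that the paper does not actually prove this lemma at all --- its ``proof'' is a one-line citation to Theorem 13.29 of Klenke --- so you have supplied an argument where the text defers to a reference. What you give is the standard compactification proof (Billingsley/Klenke): for \emph{tight} $\Rightarrow$ \emph{precompact}, embed $\Omega$ in the Hilbert cube, extract a weak-$*$ limit there via Banach--Alaoglu and Riesz representation, use tightness plus the closed-set Portmanteau inequality to keep the mass on a $\sigma$-compact subset of $\Omega$, and descend to genuine weak convergence on $\Omega$ by a Tietze truncation; for \emph{precompact} $\Rightarrow$ \emph{tight}, the finite-union-of-balls claim proved by contradiction with the open-set Portmanteau inequality, followed by the total-boundedness/completeness construction of the compact set $K$. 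All of these steps are sound, including the details people often fumble (that $K_\varepsilon$ is closed in $H$ so the closed-set inequality applies, that $\overline{\bigcap_m U_m}$ is totally bounded and closed hence compact, and the $4M\varepsilon$ bookkeeping in the truncation step). Two prerequisites carry real weight and should be acknowledged as such: the Portmanteau characterization, which you do flag and which can indeed be obtained by the closed/open approximation used in the paper's proof of Theorem~\ref{Ch1:EqualMeasuresViaBCF}, and the metrizability of the weak topology on $\mathscr{P}(\mathcal{B}(\tau))$ for Polish $\Omega$, which you invoke more casually but which is needed to pass between ``precompact'' and ``sequentially precompact'' in both directions; it deserves either a proof (e.g.\ via the L\'evy--Prokhorov metric or the same Hilbert-cube embedding) or an explicit citation. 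With those two facts in hand, your argument is complete and would serve as a genuine proof of the lemma rather than the paper's placeholder.
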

    \begin{proof}
        Result from measure theory, see Theorem 13.29 in \cite{Klenke2014}.
    \end{proof}~\\[-2em]
    
	\begin{definition} [Narrow / Weak$^{*}$ Convergence]
	    A sequence of measures $(\mu_{n})_{n\geqq 1}$ converges \emph{narrowly} to $\mu$ if $\forall~ f\in \mathcal{C}^{0}_{b}(\Omega)$
	    \[
	        \int f ~ d\mu_{n} \xrightarrow{n\uparrow\infty} \int f ~ d\mu
	    \]
	\end{definition}~\\[-1.5em]

\break
\addtocounter{page}{-1}
\thispagestyle{empty}
\begin{center}
    \vspace*{\stretch{1}}
    \Huge{\textbf{Introduction to Optimal Transport}}
    \vspace*{\stretch{1}}
\end{center}
\break
    \section{Coupling and Tranport Plans}
    
	\begin{definition}[Coupling]\label{def:Coupling}
        Given two probability spaces, $(\Omega_{1}, \mathcal{F}_{1}, \mu_{1})$ and $(\Omega_{2}, \mathcal{F}_{2}, \mu_{2})$, a \emph{coupling} of $(\mu_{1},\mu_{2})$, is a measure $\pi$ on $\Omega_{1}\times\Omega_{2}$ with marginals $\mu_{1}$ and $\mu_{2}$, that is,
        \begin{align*}
            & \pi(F_{1} \times \Omega_{2}) = \mu_{1}(F_{1}) ~~\forall F_{1} \in\mathcal{F}_{1} \\
            & \pi(\Omega_{1} \times F_{2}) = \mu_{2}(F_{2}) ~~\forall F_{2}\in\mathcal{F}_{2}
        \end{align*}
        Alternatively, one can view a coupling as a pair of random variables $X_{i} : (\Omega_{*}, \mathcal{F}_{*},\mathbb{P}) \rightarrow (\Omega_{i}, \mathcal{F}_{i}, \mu_{i})$ satisfying $\mu_{i} = {X_{i}}_{\#}\mathbb{P}$. Just find a pair $(X_{1},X_{2})$ satisfying $\pi = \mathbb{P}\circ(X_{1}^{-1},X_{2}^{-1})$; there are existence theorems (Theorem 1.104 in \cite{Klenke2014}) which show that, for a given distribution, there exists a random variable which generates that distribution.
	\end{definition}
	
	\begin{claim}\label{Ch2:CouplingMarginalEquation}
	    $\pi$ is a coupling of the probability measures $(\mu_{1}, \mu_{2})$ iff $\forall(\phi_{1},\phi_{2})\in L^{1}(\Omega_{1},\mu_{1})\times L^{1}(\Omega_{2},\mu_{2})$, or equivalently $L^{\infty}(\Omega_{1},\mu_{1})\times L^{\infty}(\Omega_{2},\mu_{2})$, we have
	    \begin{align*}
	        \int\limits_{\Omega_{1}\times\Omega_{2}}(\phi_{1}\oplus\phi_{2})(x,y) d\pi(x,y) = \int\limits_{\Omega_{1}} \phi_{1}(x) ~d\mu_{1}(x) + \int\limits_{\Omega_{2}} \phi_{2}(y) ~d\mu_{2}(y)
	    \end{align*}
	\end{claim}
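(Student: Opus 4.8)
The plan is to prove both directions of the equivalence, using the notation $(\phi_1 \oplus \phi_2)(x,y) := \phi_1(x) + \phi_2(y)$. The forward direction is a standard ``layer-cake'' argument: assuming $\pi$ has marginals $\mu_1$ and $\mu_2$, I would first verify the identity for indicator functions $\phi_1 = \eins_{F_1}$, $\phi_2 = \eins_{F_2}$, where it reduces directly to the two marginal conditions in Definition~\ref{def:Coupling}; then extend by linearity to simple functions; then to nonnegative measurable functions via the monotone convergence theorem; and finally to general $\phi_i \in L^1(\Omega_i,\mu_i)$ by splitting into positive and negative parts. Integrability of $\phi_1 \oplus \phi_2$ against $\pi$ is guaranteed precisely because the marginals of $\pi$ are $\mu_1$ and $\mu_2$, so $\int |\phi_1 \oplus \phi_2|\, d\pi \leq \int |\phi_1|\, d\mu_1 + \int |\phi_2|\, d\mu_2 < \infty$. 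Since $L^\infty(\Omega_i,\mu_i) \subset L^1(\Omega_i,\mu_i)$ for probability measures, the $L^\infty$ version follows a fortiori, and conversely the $L^\infty$ version already contains all indicators, which is all we need for the reverse direction.

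For the reverse direction, I would assume the integral identity holds for all admissible test pairs and recover the marginal conditions by making judicious choices. Taking $\phi_2 \equiv 0$ and $\phi_1 = \eins_{F_1}$ for arbitrary $F_1 \in \mathcal{F}_1$ yields
\[
    \pi(F_1 \times \Omega_2) = \int_{\Omega_1 \times \Omega_2} \eins_{F_1}(x)\, d\pi(x,y) = \int_{\Omega_1} \eins_{F_1}(x)\, d\mu_1(x) = \mu_1(F_1),
\]
and symmetrically, taking $\phi_1 \equiv 0$ and $\phi_2 = \eins_{F_2}$ gives $\pi(\Omega_1 \times F_2) = \mu_2(F_2)$. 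One small point to address is that this argument presupposes $\pi$ is already a probability measure (or at least a finite measure) on $\Omega_1 \times \Omega_2$ so that the integrals are well-defined; this is part of the standing hypothesis that $\pi$ is a measure on the product space, and taking $\phi_1 = \phi_2 = 1$ in the identity confirms $\pi(\Omega_1\times\Omega_2) = 1$, consistent with its being a coupling.

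The only genuine subtlety — and the step I would flag as the main obstacle — is the measurability and integrability bookkeeping in the forward direction: one must check that $\phi_1 \oplus \phi_2$ is $\mathcal{F}_1 \otimes \mathcal{F}_2$-measurable (it is, as a sum of compositions with the coordinate projections, which are measurable), and that the monotone convergence passage is legitimate, which again rests on the marginal identity established at the level of simple functions. Once the indicator case is in hand, everything else is routine approximation. I would present the indicator/simple-function/MCT/decomposition chain explicitly but briefly, and note that the equivalence between the $L^1$ and $L^\infty$ formulations is immediate from the inclusion of spaces together with the observation that the reverse direction only ever uses bounded test functions.
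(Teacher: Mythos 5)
Your proposal is correct and follows essentially the same route as the paper's proof: verify the identity on indicators (where it is exactly the marginal conditions), extend via simple functions, monotone convergence, and positive/negative parts, and for the converse test with $\phi_1=\eins_{F_1}$, $\phi_2=0$ and symmetrically. If anything, your version is cleaner on the bookkeeping (integrability, measurability of $\phi_1\oplus\phi_2$, and the $L^1$ versus $L^\infty$ equivalence) than the paper's somewhat informal supremum computation, but there is no substantive difference in method.
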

	\begin{proof}
        ``$\implies$'' Take $\phi_{i} = \eins_{A^{(i)}}$ for some $A^{(i)}\in\mathcal{F}_{i}$, then
        \[
            \int\limits_{\Omega_{1}\times\Omega_{2}} \phi_{1} + \phi_{2} ~d\pi = \sup\limits_{(F^{(1)},F^{(2)})\in\mathcal{F}_{1}\times\mathcal{F}_{2})} (\pi(A^{(1)}, F^{(2)}) + \pi(F^{(1)}, A^{(2)}))
        \]
        by definition of Lebesgue integration. Since measures are monotone and $A^{(1)}\times F^{(2)} \subset A^{(1)}\times \Omega_{2} \in \mathcal{F}_{1}\times\mathcal{F}_{2} ~\forall F^{(2)}\in\mathcal{F}_{2}$ and similarly for $F^{(1)} \times A^{(2)}$, the integral is maximized when $(F^{(1)}, F^{(2)}) = (\Omega_{1}, \Omega_{2})$, and by the marginal restrictions of $\pi$ we have
        \[
            \int\limits_{\Omega_{1}\times\Omega_{2}} \phi_{1} + \phi_{2} ~d\pi =\mu_{1}(A^{(1)}) + \mu_{2}(A^{(2)})
        \]
        Then proceed to simple functions and then limits of simple functions.\\[1em]
        ``$\impliedby$'' Take $\phi_{i} = \eins_{F^{(i)}}$ for $F^{(i)}\in\mathcal{F}_{i}$ and $\phi_{2} = 0$, then
        \[
            \pi(F^{(1)}\times\Omega_{2}) = \int\limits_{\Omega_{1}\times\Omega_{2}} \phi_{1} ~d\pi = \int\limits_{\Omega_{1}} \phi_{1} ~d\mu_{1} = \mu(F^{(1)})
        \]
        Similarly with $\phi_{2}$ we obtain the result.
	\end{proof}
	
	\begin{definition}[Transport Plans]
	    The set of transport plans is the set of couplings on $\Omega_{1}\times\Omega_{2}$ for $(\mu_{1}, \mu_{2})$, that is,
	    \begin{align*}
	        \Pi(\mu_{1},\mu_{2}) = \{\pi : \mathcal{F}_{1} \times \mathcal{F}_{2} \rightarrow \mathbb{R}_{0}^{+} \cup\{\infty\} ~~\vert~ \pi \text{~couples~} (\mu_{1},\mu_{2})\}
	    \end{align*}
	\end{definition}
	\begin{claim}
        Given two probability spaces, $(\Omega_{1}, \mathcal{F}_{1}, \mu_{1})$ and $(\Omega_{2}, \mathcal{F}_{2}, \mu_{2})$, the set of transport plans is nonempty.
	\end{claim}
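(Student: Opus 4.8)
The plan is to exhibit an explicit element of $\Pi(\mu_1,\mu_2)$, namely the product measure $\pi := \mu_1 \otimes \mu_2$ on the product $\sigma$-algebra $\mathcal{F}_1 \otimes \mathcal{F}_2$ (the $\sigma$-algebra on $\Omega_1\times\Omega_2$ generated by the measurable rectangles $F_1\times F_2$, which is what the notation $\mathcal{F}_1\times\mathcal{F}_2$ in the definition of transport plans really denotes). Its existence is the standard product-measure construction: since $\mu_1$ and $\mu_2$ are probability measures, hence $\sigma$-finite, the set function $F_1\times F_2 \mapsto \mu_1(F_1)\mu_2(F_2)$ extends from the algebra of finite disjoint unions of rectangles to a (unique) measure $\pi$ on $\mathcal{F}_1\otimes\mathcal{F}_2$ via the Carathéodory/Hahn–Kolmogorov extension theorem; see e.g. \cite{Klenke2014}. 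So $\pi$ is a well-defined candidate coupling.

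Next I would verify the two marginal conditions of Definition \ref{def:Coupling}. For any $F_1\in\mathcal{F}_1$, taking the second factor to be all of $\Omega_2$ gives
\[
\pi(F_1\times\Omega_2) = \mu_1(F_1)\,\mu_2(\Omega_2) = \mu_1(F_1),
\]
since $\mu_2(\Omega_2)=1$; symmetrically $\pi(\Omega_1\times F_2) = \mu_1(\Omega_1)\,\mu_2(F_2) = \mu_2(F_2)$ for all $F_2\in\mathcal{F}_2$. Thus $\pi$ has marginals $\mu_1$ and $\mu_2$, i.e. $\pi\in\Pi(\mu_1,\mu_2)$, so the set is nonempty. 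Equivalently, in the random-variable picture of Definition \ref{def:Coupling} one takes $(\Omega_*,\mathcal{F}_*,\mathbb{P}) = (\Omega_1\times\Omega_2,\ \mathcal{F}_1\otimes\mathcal{F}_2,\ \mu_1\otimes\mu_2)$ with $X_i$ the coordinate projections, and the computation above is exactly the statement ${X_i}_{\#}\mathbb{P}=\mu_i$.

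As an alternative (and a sanity check) one could instead verify the integral criterion of Claim \ref{Ch2:CouplingMarginalEquation}: for $(\phi_1,\phi_2)\in L^1(\Omega_1,\mu_1)\times L^1(\Omega_2,\mu_2)$, Tonelli/Fubini applied to $\mu_1\otimes\mu_2$ yields
\[
\int\limits_{\Omega_1\times\Omega_2}(\phi_1\oplus\phi_2)\,d(\mu_1\otimes\mu_2) = \mu_2(\Omega_2)\int\limits_{\Omega_1}\phi_1\,d\mu_1 + \mu_1(\Omega_1)\int\limits_{\Omega_2}\phi_2\,d\mu_2,
\]
which equals $\int_{\Omega_1}\phi_1\,d\mu_1 + \int_{\Omega_2}\phi_2\,d\mu_2$ because both total masses are $1$. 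There is essentially no real obstacle here: the entire content of the argument is the existence of the product measure, and the only point needing care is that the extension is genuinely countably additive on $\mathcal{F}_1\otimes\mathcal{F}_2$ — precisely the Carathéodory extension step, where $\sigma$-finiteness (automatic for probability measures) is what guarantees a well-behaved, unique extension.
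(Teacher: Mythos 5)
Your proposal is correct and takes exactly the same route as the paper, which simply exhibits the product measure $\mu_{1}\otimes\mu_{2}$ as an element of $\Pi(\mu_{1},\mu_{2})$; you have merely filled in the routine verification (existence of the product measure and the marginal computation using $\mu_{i}(\Omega_{i})=1$) that the paper leaves implicit.
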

	\begin{proof}
        $(\mu_{1}\otimes\mu_{2}) \in \Pi(\mu_{1},\mu_{2})$
	\end{proof}

\break
    \section{Kantorovich' O. T. \& Basic Properties}
    Let $c : \Omega_{1}\times\Omega_{2} \rightarrow \mathbb{R}_{0}^{+} \cup\{\infty\}$ be a loss metric and $(\Omega_{i}, \mathcal{F}_{i},\mu_{i})$ a probability space. The Kantorovich optimal transport problem is finding a $\pi^{*}$ satisfying
    \[
        \pi^{*} \in \arginf\limits_{\pi\in\Pi(\mu_{1},\mu_{2})}\mathbb{K}_{c}(\pi) = \arginf\limits_{\pi\in\Pi(\mu_{1},\mu_{2})} \int\limits_{\Omega_{1}\times\Omega_{2}} c(x,y) ~d\pi(x,y)
    \]
    where the quantity `$c(x,y)~d\pi(x,y)$' can be interpreted as `` moving the amount $d\pi(x,y)$ from $x$ to $y$ at a cost $c(x,y)$.'' The minimal cost will be denoted $\mathcal{C}_{c}(\mu_{1},\mu_{2}) = \mathbb{K}_{c}(\pi^{*})$. The problem can also be posed with random variables, using the same notation as in Definition \ref{def:Coupling}, we have
    \[
        (X^{*}_{1},X^{*}_{2}) \in \arginf\limits_{\substack{X_{i}\in\mathcal{F}_{i}\\ \mu_{i}={X_{i}}_{\#}\mathbb{P}}} \mathbb{E}_{\mathbb{P}}\left[c(X_{1},X_{2})\right]
    \]
    We want to prove the following:
    \begin{theorem}[Existence of an optimal coupling]\label{CH3:OptimalCouplingExistence}
        Let $(\Omega_{i},\mathcal{F}_{i}) ~(i=1,2)$ be two Polish probability spaces, i.e. a separable, completely metrizable, topological, probability space; let $a_{i} \in L^{1}(\Omega_{i}, \mathbb{R} \cup \{-\infty\}, \mu_{i})  ~~(i = 1,2)$ be two upper semicontinuous functions. Let $c : \Omega_{1}\times\Omega_{2} \rightarrow \mathbb{R} \cup\{\infty\}$ be a lower semicontinuous cost function, such that $c(x,y) \geqq a_{1}(x) + a_{2}(y)$ for all $x, y$. Then there is a coupling of $(\mu_{1},\mu_{2})$ which minimizes the total cost $\mathbb{E}\left[~c(X_{1},X_{2})~\right]$ among all possible couplings $(X_{1},X_{2})$.
    \end{theorem}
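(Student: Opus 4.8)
\emph{Proof strategy (proposal).} The plan is to realize $\Pi(\mu_{1},\mu_{2})$ as a sequentially compact subset of the probability measures on the product space $\Omega_{1}\times\Omega_{2}$ with the narrow topology, to show that the cost functional $\pi\mapsto\mathbb{K}_{c}(\pi)$ is lower semicontinuous there, and then to invoke the generalized Weierstrass principle: a lower semicontinuous function attains its infimum on a nonempty (sequentially) compact set. Throughout, $\Omega_{1}\times\Omega_{2}$ is again Polish (metrize by $d_{1}+d_{2}$; it is separable and complete), and $\Pi(\mu_{1},\mu_{2})\neq\emptyset$ since $\mu_{1}\otimes\mu_{2}$ lies in it.

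First I would establish compactness of $\Pi(\mu_{1},\mu_{2})$. By Ulam's theorem (Lemma \ref{Ch1:Ulam}) each $\mu_{i}$ is tight, so given $\varepsilon>0$ choose compacts $K_{i}\subset\Omega_{i}$ with $\mu_{i}(\Omega_{i}\setminus K_{i})<\varepsilon/2$; then $K_{1}\times K_{2}$ is compact and, for every $\pi\in\Pi(\mu_{1},\mu_{2})$,
\[
  \pi\big((\Omega_{1}\times\Omega_{2})\setminus(K_{1}\times K_{2})\big)\leqq\pi\big((\Omega_{1}\setminus K_{1})\times\Omega_{2}\big)+\pi\big(\Omega_{1}\times(\Omega_{2}\setminus K_{2})\big)=\mu_{1}(\Omega_{1}\setminus K_{1})+\mu_{2}(\Omega_{2}\setminus K_{2})<\varepsilon ,
\]
so $\Pi(\mu_{1},\mu_{2})$ is tight, hence precompact for the narrow (weak) topology by Prokhorov's theorem (Lemma \ref{Ch1:Prokhorov}). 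Next I would check that it is narrowly sequentially closed: if $\pi_{n}\to\pi$ narrowly with $\pi_{n}\in\Pi(\mu_{1},\mu_{2})$, then for each $\phi_{1}\in\mathcal{C}^{0}_{b}(\Omega_{1})$ the map $(x,y)\mapsto\phi_{1}(x)$ lies in $\mathcal{C}^{0}_{b}(\Omega_{1}\times\Omega_{2})$, so $\int\phi_{1}(x)\,d\pi(x,y)=\lim_{n}\int\phi_{1}(x)\,d\pi_{n}(x,y)=\int\phi_{1}\,d\mu_{1}$, and symmetrically in the second variable; by Theorem \ref{Ch1:EqualMeasuresViaBCF} the marginals of $\pi$ are $\mu_{1}$ and $\mu_{2}$, i.e. $\pi\in\Pi(\mu_{1},\mu_{2})$. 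A closed subset of a precompact set is sequentially compact.

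Then I would prove lower semicontinuity of $\mathbb{K}_{c}$. Set $\tilde c(x,y):=c(x,y)-a_{1}(x)-a_{2}(y)$; since $c$ is lower semicontinuous and $-a_{1},-a_{2}$ are lower semicontinuous, $\tilde c$ is lower semicontinuous with values in $[0,\infty]$. On a metric space such a function is the pointwise nondecreasing limit of the bounded Lipschitz (in particular bounded continuous) functions $\tilde c_{k}(z):=\inf_{w}\{\min(\tilde c(w),k)+k\,d(z,w)\}$, so by monotone convergence $\int\tilde c\,d\pi=\sup_{k}\int\tilde c_{k}\,d\pi$ for every $\pi$; as each $\pi\mapsto\int\tilde c_{k}\,d\pi$ is narrowly continuous, the supremum $\pi\mapsto\int\tilde c\,d\pi$ is lower semicontinuous. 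Since each $a_{i}$ is $\mu_{i}$-integrable, $a_{1}\oplus a_{2}$ is $\pi$-integrable with $a_{1}(x)+a_{2}(y)>-\infty$ for $\pi$-a.e. $(x,y)$, and Claim \ref{Ch2:CouplingMarginalEquation} gives
\[
  \mathbb{K}_{c}(\pi)=\int_{\Omega_{1}\times\Omega_{2}}\tilde c\,d\pi+\int_{\Omega_{1}}a_{1}\,d\mu_{1}+\int_{\Omega_{2}}a_{2}\,d\mu_{2},
\]
where the last two terms are finite constants independent of $\pi$; hence $\mathbb{K}_{c}$ is lower semicontinuous on $\Pi(\mu_{1},\mu_{2})$ and bounded below. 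To conclude: if $\mathbb{K}_{c}\equiv+\infty$ on $\Pi(\mu_{1},\mu_{2})$, every coupling is optimal; otherwise take a minimizing sequence $(\pi_{n})$, extract a narrowly convergent subsequence $\pi_{n_{k}}\to\pi^{*}\in\Pi(\mu_{1},\mu_{2})$, and get $\mathbb{K}_{c}(\pi^{*})\leqq\liminf_{k}\mathbb{K}_{c}(\pi_{n_{k}})=\inf_{\pi\in\Pi(\mu_{1},\mu_{2})}\mathbb{K}_{c}(\pi)$, so $\pi^{*}$ — equivalently the random variables $(X_{1}^{*},X_{2}^{*})$ of Definition \ref{def:Coupling} — is optimal.

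The main obstacle is the lower semicontinuity step: the cost is controlled below only by the possibly $-\infty$-valued integrable function $a_{1}\oplus a_{2}$ rather than by a constant, which is exactly what forces the reduction to the nonnegative lower semicontinuous $\tilde c$ and the use of Claim \ref{Ch2:CouplingMarginalEquation} to absorb the remaining term; the secondary technical point is the approximation of a nonnegative lower semicontinuous function from below by bounded continuous functions, which rests on metrizability of $\Omega_{1}\times\Omega_{2}$.
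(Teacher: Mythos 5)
Your proposal is correct and follows essentially the same route as the paper: tightness of $\Pi(\mu_{1},\mu_{2})$ via Ulam and Prokhorov, closedness under narrow limits by testing marginals against bounded continuous functions, reduction to the nonnegative lower semicontinuous $c-a_{1}\oplus a_{2}$ approximated from below by continuous functions, and extraction of a convergent minimizing subsequence. Your explicit truncation $\min(\tilde c(w),k)$ to keep the approximants bounded, and your handling of the degenerate case $\mathbb{K}_{c}\equiv+\infty$, are minor refinements of the paper's argument rather than a different method.
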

    \begin{proof}[\textbf{Note}]
        The lower bound for $c(\bullet, \bullet)$ in Theorem \ref{CH3:OptimalCouplingExistence} guarantees a lower bound for the Kantorovich problem, this is because, by Claim \ref{Ch2:CouplingMarginalEquation},
        \[
            \inf\limits_{\pi\in\Pi(\mu_{1},\mu_{2})}\int a_{1} + a_{2} ~d\pi = \int a_{1} + a_{2} ~d(\mu_{1}\otimes\mu_{2}) \leqq \mathbb{K}_{c}(\pi^{*})
        \]
        and since the '$a_{i}$'s are integrable, we obtain a lower bound.
    \end{proof}
    \noindent To prove Theorem \ref{CH3:OptimalCouplingExistence}, we will first need a few lemmas.
	\begin{lemma}[Baire's Theorem]\label{Ch3:Baire}
		If $f$ is a nonnegative lower semicontinuous function on $\Omega$, then $\exists (f_{n})_{n\geqq 1} \subset \mathcal{C}^{0}(\Omega)$ such that $f_{n}\uparrow f$ pointwise.
	\end{lemma}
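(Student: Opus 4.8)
The plan is to construct the approximating sequence explicitly by \emph{inf-convolution} (the Moreau--Yosida regularization). Since the space in question is Polish, hence metrizable, I fix a compatible metric $d$ on $\Omega$ and set, for $n \geqq 1$,
\[
    f_{n}(x) := \inf_{y\in\Omega}\bigl(f(y) + n\,d(x,y)\bigr) \wedge n .
\]
The truncation at height $n$ is only a convenience: it forces each $f_{n}$ to be finite-valued even in the degenerate case $f\equiv\infty$ (where one could just as well take $f_{n}\equiv n$), and it affects neither the monotonicity in $n$ nor the pointwise limit. I would then check the three requirements — continuity, monotone increase, and convergence to $f$ — in order.

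For continuity I claim each $f_{n}$ is $n$-Lipschitz, hence in $\mathcal{C}^{0}(\Omega)$. Indeed, for $x,x'\in\Omega$ and any $y$, the triangle inequality gives $f(y)+n\,d(x,y)\leqq f(y)+n\,d(x',y)+n\,d(x,x')$; taking the infimum over $y$ and then the minimum with $n$ yields $f_{n}(x)\leqq f_{n}(x')+n\,d(x,x')$, and the bound $|f_{n}(x)-f_{n}(x')|\leqq n\,d(x,x')$ follows by symmetry. Monotonicity is immediate: for fixed $y$ the map $n\mapsto f(y)+n\,d(x,y)$ is nondecreasing (here $d\geqq 0$ is all that is used) and the truncation level also grows, so $f_{n}\leqq f_{n+1}$; and evaluating the infimum at $y=x$ together with $f\geqq 0$ gives $f_{n}(x)\leqq f(x)\wedge n\leqq f(x)$.

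The substance of the argument is the pointwise convergence $f_{n}(x)\uparrow f(x)$. Fix $x$ and write $L:=\sup_{n}f_{n}(x)=\lim_{n}f_{n}(x)$, so that $L\leqq f(x)$ by the previous paragraph; it remains to show $L\geqq f(x)$. Suppose first $f(x)<\infty$ but $L<f(x)$, and pick $\alpha\in\mathbb{R}$ with $L<\alpha<f(x)$. By lower semicontinuity of $f$, the set $\{f>\alpha\}$ is an open neighborhood of $x$, so there is $\delta>0$ with $f(y)>\alpha$ whenever $d(x,y)<\delta$. Splitting the defining infimum accordingly: if $d(x,y)<\delta$ then $f(y)+n\,d(x,y)\geqq f(y)>\alpha$, while if $d(x,y)\geqq\delta$ then $f(y)+n\,d(x,y)\geqq n\delta\geqq\alpha$ as soon as $n\geqq\alpha/\delta$ (using $f\geqq 0$). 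Hence $f_{n}(x)\geqq\alpha$ for all large $n$, contradicting $f_{n}(x)\leqq L<\alpha$; so $L=f(x)$. Running the same estimate with $\alpha$ arbitrarily large handles the remaining case $f(x)=\infty$.

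I expect the pointwise-convergence step to be the main obstacle — or at least the one demanding care — because it is exactly there that the hypothesis $f\geqq 0$ is indispensable: without a lower bound on $f$, distant points $y$ could pull the infimum below $\alpha$ no matter how large $n$ is, and the scheme would collapse. (If $f$ is merely bounded below by a constant $-M$, the same proof applies after the harmless shift $f\mapsto f+M$.) The Lipschitz estimate, the monotonicity, and the inequality $f_{n}\leqq f$ are all routine once the formula is written down.
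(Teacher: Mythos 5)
Your proof is correct and uses essentially the same construction as the paper: the inf-convolution $f_{n}(x)=\inf_{y}(f(y)+n\,d(x,y))$, with the same Lipschitz estimate for continuity and the same near/far splitting of the infimum (via lower semicontinuity at $x$ and the bound $n\delta$ on distant points) for pointwise convergence. The only difference is your truncation at height $n$, a harmless refinement that additionally makes each $f_{n}$ bounded.
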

	\begin{proof}
	    Let
	    \[
	        f_{n}(\bullet) = \inf\limits_{z\in\Omega}(f(z) + n\cdot d( \bullet ,z))
	    \]
	    Then all we must show is that $f_{n}$ is (i) increasing, (ii) continuous, (iii) convergent to $f$.
	    \begin{enumerate}[(i)]
	        \item We have the inequality $f(z) + n\cdot d(x,z) \leqq f(z) + (n+1)\cdot d(x,z)$. ~Now taking the $\inf$ in $z$ on both sides yields
	        \[
                    \inf\limits_{z\in\Omega}(f(z) + n\cdot d(x,z)) = f_{n}(x) \leqq f_{n+1}(x) = \inf\limits_{z\in\Omega}(f(z) + (n+1)\cdot d(x,z))
	       \]
	       Also note that $z$ can be $x$, so we have $d(x,x) = 0$ and hence
	       \[
	            f_{n} \leqq f \tag{$\star$}\label{fnleqf}
	       \]
	       \item Choose $x,y,z \in \Omega$, then, by the triangle inequality, we have
	       \[
                    f(z) + n\cdot d(z,x) \leqq f(z) + n\cdot d(z,y) + n\cdot d(y,x)
	       \]
	       Taking the $\inf$ in $z$ on both sides and subtracting $f_{n}(y)$ from both sides
	       \[
                    f_{n}(x) - f_{n}(y) \leqq n\cdot d(y,x)
	       \]
	       Since $x$ and $y$ were arbitrary one obtains
	       \[
                    \left| f_{n}(x) - f_{n}(y) \right| \leqq n\cdot d(x,y)
	       \]
	       Now let $\epsilon > 0$ and $\delta = \dfrac{\epsilon}{n}$, then we have
	       \[
                    d(x, y) < \delta \implies \left| f_{n}(x) - f_{n}(y) \right| \leqq n\cdot d(y,x) < n\cdot \left( \dfrac{\epsilon}{n} \right) = \epsilon ~~\forall x,y
	       \]
	       and we have uniform continuity.
	       
	        \item Lower semicontinuity yields, for a fixed $x_{0}\in\Omega$, that $\forall\epsilon > 0$ $\exists~ \delta > 0 $ such that 
	        \[
                    d(x_{0}, z) < \delta \implies f(z) > f(x_{0}) - \epsilon \tag{$\star\star$}\label{lsc}
	       \]
	       Now suppose $d(x_{0}, z) > \delta$, then $\exists~ N \in \mathbb{N}$ such that $\forall~ n \geqq N$
	       \[
	            f(z) + n\cdot d(x_{0},z) \geqq n\cdot\delta > f(x_{0})
	       \]
	       since $f > 0$. However, we have, from \eqref{fnleqf}, that $f_{n} \leqq f ~\forall~n\in\mathbb{N}$, so
	       \[
	            d(~x_{0}~,~\arginf\limits_{z\in\Omega}\left(f(z) + n\cdot d(x_{0}, z))~\right) < \delta
	       \]
	       But, for all $z\in\Omega$ s.t. $d(x_{0},x) < \delta$ we have, from \eqref{lsc},
	       \[
	            f(z) + n\cdot d(x_{0},z) \geqq f(z) > f(x_{0}) - \epsilon
	       \]
	       and so
	       \[
	            \inf\limits_{\substack{z\in\Omega\\d(x_{0},z)<\delta}}( f(z) + n\cdot d(x_{0},z)) = f_{n}(x_{0}) > f(x_{0}) - \epsilon
	       \]
	       Therefore,
	       \[
	            f(x_{0}) - \epsilon \leqq f_{n}(x_{0}) \leqq f(x_{0})
	       \]
	       and so, taking $\epsilon\downarrow 0$ and $n\uparrow\infty$, we have $f_{n}(x_{0}) \rightarrow f(x_{0})$. Since $x_{0} \in \Omega$ was arbitrary, we have that $f_{n} \rightarrow f$ pointwise.
	    \end{enumerate}
	\end{proof}
	
    \begin{lemma}
		Let $f$ be a nonnegative lower semicontinuous function on $\Omega$. If $(\mu_{n})_{n\geqq 1}$ converges narrowly to $\mu$, then
		\[
		    \int f ~ d\mu \leqq \liminf\limits_{n\uparrow\infty} \int f ~ d\mu_{n}
		\]
	\end{lemma}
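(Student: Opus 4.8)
The plan is to reduce the statement to bounded continuous test functions, where narrow convergence directly applies, and then pass to the limit with the monotone convergence theorem. Concretely, I would first invoke Baire's Theorem (Lemma \ref{Ch3:Baire}): since $f\geqq 0$ is lower semicontinuous, there is a sequence $(f_{k})_{k\geqq1}\subset\mathcal{C}^{0}(\Omega)$ with $f_{k}\uparrow f$ pointwise, and moreover each $f_{k}(x)=\inf_{z}(f(z)+k\,d(x,z))\geqq 0$ because $f\geqq 0$ and $d\geqq0$. These $f_{k}$ are continuous but possibly unbounded, so I would truncate and set $g_{k}:=\min(f_{k},k)$, which lies in $\mathcal{C}^{0}_{b}(\Omega)$.

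The next step is to check that this truncated sequence still does the job, i.e. $0\leqq g_{k}\uparrow f$ pointwise. Monotonicity is immediate: since $f_{k}\leqq f_{k+1}$ and $k\leqq k+1$, and $\min(\cdot,\cdot)$ is nondecreasing in each slot, $g_{k}\leqq g_{k+1}$; also $g_{k}\leqq f_{k}\leqq f$. For pointwise convergence, at a point $x$ with $f(x)<\infty$ one has $f_{k}(x)<k$ for all large $k$, so $g_{k}(x)=f_{k}(x)\to f(x)$; at a point with $f(x)=\infty$ one has $f_{k}(x)\to\infty$, hence for every $M$ and all $k>M$ with $f_{k}(x)>M$ we get $g_{k}(x)>M$, so $g_{k}(x)\to\infty=f(x)$.

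With this in hand the conclusion is routine. Fix $k$. By monotonicity of the integral, $\int g_{k}\,d\mu_{n}\leqq \int f\,d\mu_{n}$ for every $n$, and since $g_{k}\in\mathcal{C}^{0}_{b}(\Omega)$ and $\mu_{n}\to\mu$ narrowly,
\[
    \int g_{k}\,d\mu \;=\; \lim_{n\uparrow\infty}\int g_{k}\,d\mu_{n} \;=\; \liminf_{n\uparrow\infty}\int g_{k}\,d\mu_{n} \;\leqq\; \liminf_{n\uparrow\infty}\int f\,d\mu_{n}.
\]
Now let $k\uparrow\infty$: since $0\leqq g_{k}\uparrow f$, the monotone convergence theorem applied to $\mu$ gives $\int g_{k}\,d\mu\uparrow\int f\,d\mu$, whence $\int f\,d\mu\leqq\liminf_{n\uparrow\infty}\int f\,d\mu_{n}$, as desired. (If the right-hand side is $+\infty$ the inequality is trivial, so nothing is lost.)

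The only genuinely delicate point is the truncation: one must replace the Baire approximants by \emph{bounded} functions to be allowed to use the definition of narrow convergence, yet still retain a sequence increasing to $f$ everywhere, including on $\{f=\infty\}$. Truncating at the running level $k$ (rather than at a fixed constant) is precisely what makes $g_{k}\uparrow f$ hold on that set; everything else is bookkeeping with monotone convergence and the definition of narrow convergence.
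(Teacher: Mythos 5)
Your proof is correct and follows essentially the same route as the paper: approximate $f$ from below by the Baire sequence, apply narrow convergence to each fixed approximant, and conclude with monotone convergence. Your explicit truncation $g_{k}=\min(f_{k},k)$ is in fact a welcome refinement, since Lemma \ref{Ch3:Baire} only produces continuous (not necessarily bounded) approximants, a point the paper's proof glosses over when it asserts $(f_{n})\subset\mathcal{C}_{b}^{0}(\Omega)$; your ordering of limits also avoids the paper's implicit assumption that $\int f\,d\mu<\infty$ in the Beppo--Levi step.
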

	\begin{proof}
	    Since $g$ is lower semicontinuous $\exists (f_{n})_{n\geqq 1} \subset \mathcal{C}_{b}^{0}(\Omega)$ such that $f_{n} \uparrow f$ by Lemma \ref{Ch3:Baire}. Let $\epsilon > 0$, by the Beppo-Levi lemma for nonnegative increasing measurable functions, we have that $\exists K\in\mathbb{N}$ s.t. $\forall k>K$
	    \[
	        \left|\int f ~d\mu - \int f_{k} ~d\mu\right| = \int f ~d\mu - \int f_{k} ~d\mu \leqq \epsilon
	    \]
	    rearranging we obtain
	    \[
	        \int f ~d\mu \leqq \int f_{k} ~d\mu + \epsilon \tag{$\star$}\label{Ch3:3-3-BeppoLevi}
	    \]
	    Now, by narrow convergence, we have that $\exists N\in\mathbb{N}$ s.t. $\forall n\geqq N$
	    \[
	        \left| \int f_{k} ~d\mu - \int f_{k} ~d\mu_{n} \right| < \epsilon \\
	    \]
	    by definition of the absolute value we have
	    \[
	         -\epsilon < \int f_{k} ~d\mu - \int f_{k} ~d\mu_{n} < \epsilon\\
	    \]
	    and then we add $\epsilon$ to both sides
	    \[
	        0 < \int f_{k} ~d\mu - \int f_{k} ~d\mu_{n} < \int f_{k} ~d\mu - \int f_{k} ~d\mu_{n} + \epsilon < 2\epsilon \tag{$\star\star$}\label{Ch3:3-3-Narrow}
	    \]
	    ``Adding zero'' to \eqref{Ch3:3-3-BeppoLevi}, applying \eqref{Ch3:3-3-Narrow}, then recalling that $\int f_{k} ~d\mu_{n} \leqq \int f ~d\mu_{n}$ since $f_{k}\leqq f ~\forall~ k\in\mathbb{N}$ we obtain
	    \begin{align*}
	        \int f ~d\mu 
	            &\leqq \int f_{k} ~d\mu \pm \int f_{k} ~d\mu_{n} + \epsilon\\
	            &= \int f_{k} ~d\mu_{n} + \left(\int f_{k} ~d\mu - \int f_{k} ~d\mu_{n}\right) + \epsilon\\
	            &\leqq \int f_{k} ~d\mu_{n} + 3\epsilon\\
	            &\leqq \int f ~d\mu_{n} + 3\epsilon
	    \end{align*}
	    Now, by taking the $\liminf$ in $n$ and taking $\epsilon\downarrow 0$ we obtain the result
	    \[
	        \liminf\limits_{n\uparrow\infty}\int f ~d\mu = \int f ~d\mu \leqq \liminf\limits_{n\uparrow\infty}\int f ~d\mu_{n}
	    \]
	\end{proof}
	
	\begin{lemma}[Lower semicontinuity of the cost functional] \label{Ch3:LSCCostFunctional}
	Let $\Omega_{1}$ and $\Omega_2$ be two Polish spaces, and $c : \Omega_{1}\times\Omega_{2} \rightarrow \mathbb{R}\cup\{\infty\}$ a lower semicontinuous cost function. Let $h : \Omega_{1}\times\Omega_{2} \rightarrow \mathbb{R}\cup\{-\infty\}$ be an upper semicontinuous function such that $c \geqq h$ everywhere. Let $(\pi_{k})_{k\in\mathbb{N}}$ be a sequence of probability measures on $\Omega_{1}\times\Omega_{2}$, converging weakly to some $\pi \in \mathscr{P}(\Omega_{1}\times\Omega_{2})$, in such a way that $h\in L^{1}(\pi_{k})\cap L^{1}(\pi)$, and
	\[
	    \int\limits_{\Omega_{1}\times\Omega_{2}} h ~d\pi_{k} \xrightarrow{k \uparrow \infty} \int\limits_{\Omega_{1}\times\Omega_{2}} h ~d\pi
	\]
	Then 
	\[
	    \int\limits_{\Omega_{1}\times\Omega_{2}} c ~d\pi \leqq \liminf\limits_{k\uparrow\infty}\int\limits_{\Omega_{1}\times\Omega_{2}} c ~d\pi_{k}
	\]
	\end{lemma}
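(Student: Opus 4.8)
The whole statement should reduce to the previous lemma by subtracting off $h$. First I would note that $\Omega_{1}\times\Omega_{2}$, equipped with the sum metric $d\big((x,y),(x',y')\big) = d_{1}(x,x') + d_{2}(y,y')$, is again a Polish space (a finite product of separable completely metrizable spaces is separable and completely metrizable), so the preceding lemma is applicable on $\Omega_{1}\times\Omega_{2}$ and "weak'' convergence of the $\pi_{k}$ is the same as narrow convergence there.

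Next I would set $g := c - h$. Since $c$ takes values in $\mathbb{R}\cup\{\infty\}$ and $h$ in $\mathbb{R}\cup\{-\infty\}$, this difference is unambiguously defined with values in $\mathbb{R}\cup\{\infty\}$, and $g\geqq 0$ everywhere by the hypothesis $c\geqq h$. Moreover $g = c + (-h)$ is a sum of two lower semicontinuous $(-\infty,\infty]$-valued functions ($c$ is lsc by assumption, $-h$ is lsc because $h$ is usc), hence $g$ is lower semicontinuous. Thus $g$ is a nonnegative lsc function on the Polish space $\Omega_{1}\times\Omega_{2}$, and $\pi_{k}\to\pi$ narrowly, so the previous lemma yields
\[
    \int\limits_{\Omega_{1}\times\Omega_{2}} g ~d\pi \leqq \liminf\limits_{k\uparrow\infty} \int\limits_{\Omega_{1}\times\Omega_{2}} g ~d\pi_{k} .
\]

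Then I would unwind $g$ by linearity. Because $h\in L^{1}(\pi)\cap L^{1}(\pi_{k})$, writing $h = h^{+} - h^{-}$ with $\int h^{\pm}$ finite against each measure and applying additivity of the integral for nonnegative measurable functions to the identity $c + h^{-} = g + h^{+}$ gives $\int c ~d\pi = \int g ~d\pi + \int h ~d\pi$, and likewise for every $\pi_{k}$; here $\int c ~d\pi$ and $\int c ~d\pi_{k}$ are well-defined in $(-\infty,\infty]$ precisely because $c^{-}\leqq h^{-}$ is integrable. Substituting into the displayed inequality and using that $\int h ~d\pi_{k} \to \int h ~d\pi$ (a convergent sequence can be pulled out of a $\liminf$), the right-hand side becomes $\liminf_{k} \int c ~d\pi_{k} - \int h ~d\pi$; cancelling the finite quantity $\int h ~d\pi$ from both sides gives $\int c ~d\pi \leqq \liminf_{k}\int c ~d\pi_{k}$.

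\textbf{Main obstacle.} There is no deep step; the content lives entirely in the previous lemma. The two points requiring care are: (a) verifying that $c-h$ is genuinely nonnegative and lower semicontinuous, i.e.\ that no $\infty-\infty$ arises and that the "sum of lsc is lsc'' fact is invoked for functions valued in $(-\infty,\infty]$; and (b) the extended-real arithmetic in the linearity and cancellation step, where the integrability of $h$ — and hence of $c^{-}$ — is exactly what makes $\int c ~d\pi$, $\int c ~d\pi_{k}$ well-defined in $(-\infty,\infty]$ so the inequalities are not vacuous. I would make both of these explicit rather than leave them implicit.
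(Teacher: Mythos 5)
Your proposal is correct and follows exactly the paper's route: the paper's entire proof is the one-line instruction to replace $c$ by the nonnegative lower semicontinuous function $c-h$ and apply the preceding lemma. You simply make explicit the details the paper leaves implicit (Polishness of the product, the extended-real arithmetic, and the cancellation of $\int h\,d\pi$ using the hypothesis $\int h\,d\pi_{k}\to\int h\,d\pi$), all of which are handled correctly.
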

	\begin{proof}
        Replace $c$ by $c-h$, a non-negative lower semicontinuous function, and apply the previous lemma.
	\end{proof}
	
    \begin{lemma}[Tightness of transference plans]\label{CH3:TightnessOfTransferencePlans}
    Let $\Omega_{1}$ and $\Omega_{2}$ be two Polish spaces. Let $\mathcal{P}_{1}\subset\mathscr{P}(\mathcal{F}_{1})$ and $\mathcal{P}_{2}\subset\mathscr{P}(\mathcal{F}_{2})$ be tight subsets of $\mathscr{P}(\mathcal{F}_{1})$ and $\mathscr{P}(\mathcal{F}_{2})$ respectively. Then the set $\Pi(\mathcal{P}_{1},\mathcal{P}_{1})$ of all transference plans whose marginals lie in $\mathcal{P}_{1}$ and $\mathcal{P}_{2}$ respectively, is itself tight in $\mathscr{P}(\mathcal{F}_{1}\times\mathcal{F}_{2})$.
    \end{lemma}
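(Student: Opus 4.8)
The plan is to reduce joint tightness to the one-dimensional tightness of the two marginal families, using a product of compact sets as the ``good'' set. First I would fix $\epsilon>0$. Since $\mathcal{P}_{1}$ is tight, there is a compact $\mathcal{K}_{1}\subset\Omega_{1}$ with $\sup_{\mu_{1}\in\mathcal{P}_{1}}\mu_{1}(\Omega_{1}\setminus\mathcal{K}_{1})<\epsilon/2$, and likewise a compact $\mathcal{K}_{2}\subset\Omega_{2}$ with $\sup_{\mu_{2}\in\mathcal{P}_{2}}\mu_{2}(\Omega_{2}\setminus\mathcal{K}_{2})<\epsilon/2$. Set $\mathcal{K}:=\mathcal{K}_{1}\times\mathcal{K}_{2}$. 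This is compact in $\Omega_{1}\times\Omega_{2}$ for the product topology, being a finite product of compact spaces (Theorem \ref{Ch1:Tychonoff}; alternatively one argues directly with sequences, since Polish spaces are metrizable and a finite product of sequentially compact sets is sequentially compact), and it is Borel because compact subsets of a Hausdorff space are closed.

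Next comes the set-theoretic step: for the complement of the ``box'' one has the identity
\[
    (\Omega_{1}\times\Omega_{2})\setminus(\mathcal{K}_{1}\times\mathcal{K}_{2}) = \big((\Omega_{1}\setminus\mathcal{K}_{1})\times\Omega_{2}\big)\cup\big(\Omega_{1}\times(\Omega_{2}\setminus\mathcal{K}_{2})\big).
\]
Let $\pi\in\Pi(\mathcal{P}_{1},\mathcal{P}_{2})$ be arbitrary, with marginals $\mu_{1},\mu_{2}$ (which lie in $\mathcal{P}_{1},\mathcal{P}_{2}$ by hypothesis). Applying subadditivity of $\pi$ and then the marginal identities from Definition \ref{def:Coupling} (equivalently Claim \ref{Ch2:CouplingMarginalEquation}),
\[
    \pi\big((\Omega_{1}\times\Omega_{2})\setminus\mathcal{K}\big) \leqq \pi\big((\Omega_{1}\setminus\mathcal{K}_{1})\times\Omega_{2}\big) + \pi\big(\Omega_{1}\times(\Omega_{2}\setminus\mathcal{K}_{2})\big) = \mu_{1}(\Omega_{1}\setminus\mathcal{K}_{1}) + \mu_{2}(\Omega_{2}\setminus\mathcal{K}_{2}) < \tfrac{\epsilon}{2}+\tfrac{\epsilon}{2} = \epsilon,
\]
where the bound is uniform over all such $\pi$ because it only depends on $\mathcal{K}_{1},\mathcal{K}_{2}$.

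Taking the supremum over $\pi\in\Pi(\mathcal{P}_{1},\mathcal{P}_{2})$ then yields $\sup_{\pi}\pi\big((\Omega_{1}\times\Omega_{2})\setminus\mathcal{K}\big)<\epsilon$, which is precisely the definition of tightness of $\Pi(\mathcal{P}_{1},\mathcal{P}_{2})$. I do not expect a genuine obstacle here; the only points that warrant a moment's care are (a) justifying that $\mathcal{K}_{1}\times\mathcal{K}_{2}$ is simultaneously compact and Borel-measurable in the product space, and (b) getting the complement identity together with the direction of subadditivity right, so that the two ``bad slabs'' $(\Omega_{1}\setminus\mathcal{K}_{1})\times\Omega_{2}$ and $\Omega_{1}\times(\Omega_{2}\setminus\mathcal{K}_{2})$ genuinely cover the complement of the box and each is controlled by a single marginal. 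Everything else is routine bookkeeping.
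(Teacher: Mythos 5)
Your proof is correct and follows essentially the same route as the paper's: choose compact sets $\mathcal{K}_{1},\mathcal{K}_{2}$ witnessing tightness of the two marginal families, form the product box (compact by Tychonoff), and bound the $\pi$-measure of its complement by the sum of the two marginal tails, uniformly in $\pi$. If anything, your set-theoretic identity expressing the complement of $\mathcal{K}_{1}\times\mathcal{K}_{2}$ as a union of two slabs is the correct one, whereas the paper's displayed computation momentarily writes that event as an intersection before recovering the same $2\epsilon$ bound through the subsequent inequalities.
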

    \begin{proof}
        Let $\mu_{1} \in \mathcal{P}_{1}$, $\mu_{2} \in \mathcal{P}_{1}$, and $\pi \in \Pi(\mu_{1}, \mu_{2})$. By Ulam's tightness theorem (and by assumption), we have that
        \[
            \forall \epsilon > 0 ~ \exists ~\text{compact}~ \mathcal{K}^{(i)}_{\epsilon}\subset\Omega_{i} ~(\indep ~\mu_{i}) ~\text{s.t.}~ \mu_{i}(\Omega_{i}\setminus\mathcal{K}^{(i)}_{\epsilon}) \leqq \epsilon
        \]
        Let $(X_{1},X_{2})$ be a coupling of $(\mu_{1}, \mu_{2})$, that is, $law(X_{i}) = {X_{i}}_{\#}\mathbb{P} = \mu_{i}$, then
        \begin{align*}
            &\implies \mathbb{P}((X_{1},X_{2}) \not\in \mathcal{K}^{(1)}_{\epsilon}\hspace{-0.25em}\times\mathcal{K}^{(2)}_{\epsilon})\\
            &~~~~~~~=\mathbb{P}(\{~\omega ~\vert~ X_{1}(\omega)\not\in\mathcal{K}^{(1)}_{\epsilon}\} \cap \{~\omega ~\vert~ X_{2}(\omega)\not\in\mathcal{K}^{(2)}_{\epsilon}\}) \\
            &~~~~~~~=\mathbb{P}(X_{1}^{-1}(\Omega_{1}\setminus\mathcal{K}^{(1)}_{\epsilon}) \cap X_{2}^{-1}(\Omega_{2}\setminus\mathcal{K}^{(2)}_{\epsilon})) \\
            &~~~~~~~\leqq \mathbb{P}(X_{1}^{-1}(\Omega_{1}\setminus\mathcal{K}^{(1)}_{\epsilon}) \cup X_{2}^{-1}(\Omega_{2}\setminus\mathcal{K}^{(2)}_{\epsilon})) \\
            &~~~~~~~\leqq \mathbb{P}(X_{1}^{-1}(\Omega_{1}\setminus\mathcal{K}^{(1)}_{\epsilon})) +  \mathbb{P}(X_{2}^{-1}(\Omega_{2}\setminus\mathcal{K}^{(2)}_{\epsilon})) \\
            &~~~~~~~=\mu_{1}(\Omega_{1}\setminus\mathcal{K}^{(1)}_{\epsilon})) + \mu_{2}(\Omega_{2}\setminus\mathcal{K}^{(2)}_{\epsilon})) \\
            &~~~~~~~\leqq \epsilon + \epsilon = 2\epsilon ~\indep~ \mu_{i} \\
        \end{align*}~\\[-2.5em]
        And since $\mathcal{K}_{\epsilon}^{(i)}$ is compact, we have, by Theorem \ref{Ch1:Tychonoff} (Tychonoff), that $\mathcal{K}_{\epsilon}^{(1)}\hspace{-0.25em} \times \mathcal{K}_{\epsilon}^{(2)} \subset \Omega_{1}\times\Omega_{2}$ too is compact; therefore, $\Pi(\mathcal{P}_{1},\mathcal{P}_{2})$ is tight.
    \end{proof}
    
    \begin{proof}[Proof of Theorem \ref{CH3:OptimalCouplingExistence}]
        Since $\Omega_{i}$ is a Polish space, we have, by Theorem \ref{Ch1:Ulam} (Ulam), that $\mu_{i}$ is tight and, by Lemma \ref{CH3:TightnessOfTransferencePlans}, that $\Pi(\mu_{1},\mu_{2})$ is tight, and so by Theorem \ref{Ch1:Prokhorov} (Prokhorov) this set has a compact closure. Now, take $(\pi_{k})_{k\geqq 1} \subset \Pi(\mu_{1},\mu_{2})$ s.t. $\pi_{k} \xrightarrow{k\uparrow\infty} \pi$ in the narrow sense; we want to show that $\pi\in\Pi(\mu_{1},\mu_{2})$, i.e. has margins $\mu_{1}$ and $\mu_{2}$. Let $(\phi_{1},\phi_{2}) \in L^{1}(\mu_{1})\times L^{1}(\mu_{2})$, then we have
        \[
            \int\limits_{\Omega_{1}\times\Omega_{2}} \phi_{1} + \phi_{2} ~d\pi_{k} = \int\limits_{\Omega_{1}} \phi_{1} ~d\mu_{1} + \int\limits_{\Omega_{2}} \phi_{2} ~d\mu_{2}
            \xrightarrow{k\uparrow\infty} \int\limits_{\Omega_{1}\times\Omega_{2}} \phi_{1} + \phi_{2} ~d\pi
        \]
        hence
        \[
            \int\limits_{\Omega_{1}\times\Omega_{2}} \phi_{1} + \phi_{2} ~d\pi = \int\limits_{\Omega_{1}} \phi_{1} ~d\mu_{1} + \int\limits_{\Omega_{2}} \phi_{2} ~d\mu_{2}
        \]
        and by Claim \ref{Ch2:CouplingMarginalEquation} we conclude that $\pi\in\Pi(\mu_{1},\mu_{2})$; hence, $\Pi(\mu_{1},\mu_{2})$ is closed, and since it has a compact closure, we have that it is compact. Now let $(\pi_{k})_{k\geqq1}$ be the minimizing sequence for $\int c ~d\pi_{k}$ which converges to the optimal transport cost. Since $\Pi(\mu_{1},\mu_{2})$ is compact, take a narrowly convergent subsequence to $\pi\in \Pi(\mu_{1},\mu_{2})$. Notice that
        \[
            h : \Omega_{1}\times\Omega_{2} \ni (x_{1}, x_{2}) \mapsto a_{1}(x_{1}) + a_{2}(x_{2}) = h(x_1, x_2) \in \mathbb{R}
        \]
        is $L^{1}(\pi_{k}) \cap L^{1}(\pi)$ and, by assumption, $c \geqq h$ everywhere; moreover,
        \[
            \int h ~d\pi_{k} = \int h ~d\pi = \int a_{1} ~d\mu_{1} + \int a_{2} ~d\mu_{2}
        \]
        Therefore, with Lemma \ref{Ch3:LSCCostFunctional} on $c-h$, we have
        \[
    	    \int c ~d\pi \leqq \liminf\limits_{k\uparrow\infty}\int c ~d\pi_{k}
        \]
        thus $\pi$ is minimizing.
    \end{proof}

    \begin{theorem}[Optimality is inherited by restriction]
    Let $(\Omega_{i},\mathcal{F}_{i},\mu_{i}) ~(i=1,2)$ be two Polish spaces, $a_{i}\in L^{1}(\Omega_{i},\mu_{i})$, and let $c:\Omega_{1}\times\Omega_{2} \rightarrow \mathbb{R}\cup\{\infty\}$ be a measurable cost function such that $c \geqq a_{1} + a_{2}$; let $\mathcal{C}_{c}(\mu_{1},\mu_{2})$ be the optimal transport cost from $\mu_{1}$ to $\mu_{2}$. Assume $\mathcal{C}_{c}(\mu_{1},\mu_{2}) < \infty$ and let $\pi\in\Pi(\mu_{1},\mu_{2})$ be an optimal transport plan. Let $\tilde\pi$ be a nonnegative measure on $\mathcal{F}_{1}\times\mathcal{F}_{2}$ such that $\tilde\pi \leqq \pi$ and $\pi(\Omega_{1}\times\Omega_{2}) > 0$. Then the probability measure
    \[
        \pi' = \dfrac{\tilde\pi}{\tilde\pi(\Omega_{1}\times\Omega_{2})} = \dfrac{\tilde\pi}{\tilde{Z}}
    \]
    is an optimal transference plan between its marginals $\mu_{1}'$ and $\mu_{2}'$.
    
    Moreover, if $\pi$ is the unique optimal transference plan between $\mu_{1}$ and $\mu_{2}$, then $\pi'$ is the unique optimal transference plan between $\mu_{1}'$ and $\mu_{2}'$.
    \end{theorem}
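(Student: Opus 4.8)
The plan is to argue by contradiction via a perturbation (``cut-and-paste'') construction, the only genuinely technical point being the finiteness bookkeeping that licenses adding and subtracting $c$-integrals. (I read the hypothesis ``$\pi(\Omega_1\times\Omega_2)>0$'' as ``$\tilde\pi(\Omega_1\times\Omega_2)>0$'', which is what makes the normalization meaningful.)

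\emph{Step 1: structural facts and finiteness.} Since $0\leqq\tilde\pi\leqq\pi$ as set functions on $\mathcal F_1\times\mathcal F_2$, the total mass $\tilde Z:=\tilde\pi(\Omega_1\times\Omega_2)$ satisfies $0<\tilde Z\leqq 1$, and the marginals of $\tilde\pi$, namely $\tilde Z\mu_1'$ and $\tilde Z\mu_2'$, are dominated by $\mu_1$ and $\mu_2$ respectively; in particular $a_i\in L^1(\mu_i')$. Because $\pi$ is optimal with $\mathcal C_c(\mu_1,\mu_2)<\infty$ and $c\geqq a_1+a_2$ with $a_i$ integrable, $\int c\,d\pi\in\mathbb R$. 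The function $c-a_1-a_2\geqq0$ is measurable and nonnegative, so monotonicity of the integral under $\tilde\pi\leqq\pi$ gives
\[
\int (c-a_1-a_2)\,d\tilde\pi \;\leqq\; \int (c-a_1-a_2)\,d\pi \;=\; \mathcal C_c(\mu_1,\mu_2)-\int a_1\,d\mu_1-\int a_2\,d\mu_2 \;<\;\infty .
\]
Adding the finite quantity $\int a_1\,d(\tilde Z\mu_1')+\int a_2\,d(\tilde Z\mu_2')$ shows $\int c\,d\tilde\pi$ is finite, hence so is $\int c\,d\pi'=\tilde Z^{-1}\int c\,d\tilde\pi$, and in particular $\mathcal C_c(\mu_1',\mu_2')\leqq\int c\,d\pi'<\infty$. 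This is the step I expect to be the main obstacle; everything afterward is formal.

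\emph{Step 2: optimality of $\pi'$.} Suppose, for contradiction, that $\pi'$ is not optimal between $\mu_1'$ and $\mu_2'$; since $\int c\,d\pi'<\infty$, there is $\pi''\in\Pi(\mu_1',\mu_2')$ with $\int c\,d\pi''<\int c\,d\pi'$. Set
\[
\sigma \;:=\; \pi-\tilde\pi+\tilde Z\,\pi'' .
\]
Then $\sigma\geqq0$ (as $\pi-\tilde\pi\geqq0$ and $\tilde Z\pi''\geqq0$), $\sigma(\Omega_1\times\Omega_2)=1-\tilde Z+\tilde Z=1$, and for every $A\in\mathcal F_1$,
\[
\sigma(A\times\Omega_2)=\mu_1(A)-\tilde Z\mu_1'(A)+\tilde Z\mu_1'(A)=\mu_1(A),
\]
and symmetrically on the second factor; equivalently, one may verify $\sigma\in\Pi(\mu_1,\mu_2)$ through test functions using Claim \ref{Ch2:CouplingMarginalEquation}. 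Finally, using $\tilde Z\int c\,d\pi'=\int c\,d\tilde\pi$,
\[
\int c\,d\sigma=\int c\,d\pi-\int c\,d\tilde\pi+\tilde Z\int c\,d\pi''<\int c\,d\pi-\int c\,d\tilde\pi+\tilde Z\int c\,d\pi'=\int c\,d\pi=\mathcal C_c(\mu_1,\mu_2),
\]
contradicting the optimality of $\pi$. Hence $\pi'$ is optimal between $\mu_1'$ and $\mu_2'$.

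\emph{Step 3: the uniqueness addendum.} Assume $\pi$ is the unique optimal plan between $\mu_1$ and $\mu_2$, and suppose $\pi''\in\Pi(\mu_1',\mu_2')$ is optimal with $\pi''\neq\pi'$. Running the construction of Step 2 with this $\pi''$ produces $\sigma\in\Pi(\mu_1,\mu_2)$ with $\int c\,d\sigma=\int c\,d\pi$, so $\sigma$ is optimal; uniqueness forces $\sigma=\pi$, whence $\tilde Z\,\pi''=\tilde\pi=\tilde Z\,\pi'$ and therefore $\pi''=\pi'$, a contradiction. Thus $\pi'$ is the unique optimal transference plan between $\mu_1'$ and $\mu_2'$. $\qquad\blacksquare$
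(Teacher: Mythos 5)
Your proposal is correct and follows essentially the same route as the paper: the cut-and-paste measure $\pi-\tilde\pi+\tilde Z\pi''$, verification of its marginals, and the resulting contradiction with the optimality (resp.\ uniqueness) of $\pi$. The only differences are improvements in rigor rather than of method: your Step 1 supplies the finiteness of $\int c\,d\tilde\pi$ (via $c-a_1-a_2\geqq 0$ and $\tilde\pi\leqq\pi$) that licenses the subtraction the paper performs without comment, and your uniqueness step cancels the measures directly from $\sigma=\pi$ instead of detouring through test functions.
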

    
    \begin{proof}
        Suppose $\pi'$ is not optimal, then $\exists ~\pi''$ such that, for all $F^{(i)}\in\mathcal{F}_{i}$,
        \[
            \pi''(\bullet\times F^{(2)}) = \mu_{1}', ~~~~~~~\pi''(F^{(1)}\times\bullet) = \mu_{2}'
        \]
        and
        \[
            \int c ~d\pi'' < \int c ~d\pi'
        \]
        Now, consider
        \begin{align*}
            \hat\pi &= (\pi - \tilde\pi) + \tilde{Z}\pi''\\
                &= (\pi - \tilde{Z}\dfrac{\tilde{\pi}}{\tilde{Z}}) + \tilde{Z}\pi''\\
                &= (\pi - \tilde{Z}\pi') + \tilde{Z}\pi''\\
                &= \pi + \tilde{Z}(\pi'' - \pi')
        \end{align*}
        where $\tilde{Z}=\tilde{\pi}(\Omega_{1}\times\Omega_{2}) > 0$ by assumption. It is clear that $\hat\pi$ is nonnegative since $\tilde{\pi} \leqq \pi$ and $\pi'' \geqq 0$. Note that $\hat\pi\in\Pi(\mu_{1},\mu_{2})$, that is, for all $F^{(i)}\in\mathcal{F}_{i}$
        \[
        \begin{cases}
            \hat\pi(F^{(1)}\times\Omega_{2}) = \mu_{1}(F^{(1)} + \tilde{Z}\left(~\mu_{1}'(F^{(1)}) - \mu_{1}'(F^{(1)})~\right) = \mu_{1}(F^{(1)})\\
            ~\\
            \hat\pi(\Omega_{1}\times F^{(2)}) = \mu_{1}(F^{(2)} + \tilde{Z}\left(~\mu_{2}'(F^{(2)}) - \mu_{2}'(F^{(2)})~\right) = \mu_{2}(F^{(2)})\\
        \end{cases}
        \]
        Since $\int c ~d(\tilde{Z}(\pi''-\pi')) < 0$, we obtain
        \[
            \int c ~d\hat\pi = \int c ~d\pi + \int c ~d(\tilde{Z}(\pi''-\pi'))  < \int c ~d\pi
        \]
        which contracts the optimally of $\pi$; therefore, $\pi'$ is optimal. Now suppose $\pi$ is a unique optimal transference plan, let $\pi'$ and $\pi''\in\Pi(\mu_{1}',\mu_{2}')$ be optimal, define $\hat\pi$ as above and note that $\hat\pi \leqq \pi$ (since $\pi$ is optimal), hence $\hat\pi = \pi$ yielding
        \[
                \int \phi ~d\hat\pi = \int \phi ~d\pi + \int \phi ~d(\tilde{Z}(\pi''-\pi'))  = \int \phi ~d\pi
        \]
        $\forall~ \phi\in\L^{1}(\pi)$, and so, by Claim \ref{Ch1:EqualMeasuresViaBCF} (bounded continuous equality, \textbf{not} $L^{1}(\pi)$, suppose $\infty$ at a point where $\pi \neq 0$ and $\pi'=0$) on $(\mu_{1}',\mu_{2}')$,  $\pi'=\pi''$ and so $\pi'$ is unique.
    \end{proof}

\break  
    \section{The Wasserstein Distance}
    We want to be able to say that
    \[
        \mathcal{C}_{c}(\mu_{1},\mu_{2}) = \inf\limits_{\pi\in\Pi(\mu_{1},\mu_{2})}\mathbb{K}_{c}(\pi)
    \]
    is the ``distance between $\mu_{1} ~\text{and}~ \mu_{2}$'', but, in general, $\mathcal{C}_{c}(\bullet,\bullet)$ does not satisfy the axioms of a distance function, i.e. a metric; however, we obtain such a metric characteristic when $c$ is a metric such as $\ell^{p}$ for some $p\in\mathbb{N}$.
    \begin{definition}
        Let $(\Omega,d)$ be a Polish metric space, and let $p\in [1,\infty)$. For any two probability measures $\mu_{1},\mu_{2}$ in $(\Omega,\mathcal{F})$, the \emph{Wasserstein distance of order p} between $\mu_{1}$ and $\mu_{2}$ is defined by the formula
        \begin{align*}
            \mathcal{W}_{p}(\mu_{1},\mu_{2}) 
                &= \mathcal{C}_{d^{p}(\bullet,\bullet)} ^{\frac{1}{p}}(\mu_{1},\mu_{2}) = \left( \inf\limits_{\pi\in\Pi(\mu_{1},\mu_{2})} \int\limits_{\Omega} d^{p}(x,y) ~d\pi \right)^{\frac{1}{p}}\\
                &= \inf\limits_{\substack{X_{i}\in\mathcal{F}\\law(X_{i})=\mu_{i}}} \left(\mathbb{E}\left[ d^{p}(X_{1},X_{2}) \right]^{\frac{1}{p}} \right)
        \end{align*}
    \end{definition}
    
    
    \begin{theorem}[Hahn-Banach Extension]
        Let $(\Theta, ||\bullet||)$ be a normed linear space, let $\Theta'\subset\Theta$ be a linear subspace and let $\ell \in (\Theta')^{*}$, then $\exists~ \tilde\ell \in \Theta^{*}$ such that $\ell(\omega) = \tilde\ell(\omega) ~\forall~ \omega\in\Theta'$.
    \end{theorem}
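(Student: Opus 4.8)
The plan is to obtain $\tilde\ell$ from the analytic (dominated-extension) form of the Hahn--Banach theorem, applied with the seminorm
\[
    p(\omega) := \lVert \ell \rVert_{(\Theta')^{*}}\, \lVert \omega \rVert ,
\]
which is subadditive and positively homogeneous by the triangle inequality and homogeneity of $\lVert \cdot \rVert$, and which dominates $\ell$ on $\Theta'$ since $\ell(\omega) \leqq |\ell(\omega)| \leqq p(\omega)$. (If $\ell = 0$ take $\tilde\ell = 0$; otherwise we may assume $\lVert \ell \rVert_{(\Theta')^{*}} > 0$.) The argument then has three stages: a one-dimensional extension step, a Zorn's lemma argument to exhaust $\Theta$, and a final boundedness check.

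First I would establish the \emph{one-step lemma}: if $M$ is a linear subspace with $\Theta' \subseteq M \subsetneq \Theta$ and $g : M \to \mathbb{R}$ is linear with $g \leqq p$ on $M$, and $\omega_{0} \in \Theta \setminus M$, then $g$ extends to a linear $g'$ on $M \oplus \mathbb{R}\omega_{0}$ with $g' \leqq p$. The only freedom is the scalar $\alpha := g'(\omega_{0})$; writing a general element as $m + t\omega_{0}$ and dividing through by $|t|$ (using homogeneity of $g$ and $p$), the requirement $g' \leqq p$ reduces to $g(y) + \alpha \leqq p(y + \omega_{0})$ and $g(z) - \alpha \leqq p(z - \omega_{0})$ for all $y, z \in M$, i.e.
\[
    \sup_{z \in M}\bigl( g(z) - p(z - \omega_{0}) \bigr) \;\leqq\; \alpha \;\leqq\; \inf_{y \in M}\bigl( p(y + \omega_{0}) - g(y) \bigr).
\]
Such an $\alpha$ exists because, for all $y, z \in M$, $g(y) + g(z) = g(y + z) \leqq p(y + z) \leqq p(y + \omega_{0}) + p(z - \omega_{0})$, so the left-hand supremum does not exceed the right-hand infimum; pick $\alpha$ anywhere in the resulting nonempty interval.

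Next I would invoke Zorn's lemma on the set $\mathcal{E}$ of pairs $(M, g)$ with $M$ a subspace containing $\Theta'$, $g : M \to \mathbb{R}$ linear, $g|_{\Theta'} = \ell$, and $g \leqq p$ on $M$, partially ordered by $(M_{1}, g_{1}) \preceq (M_{2}, g_{2})$ iff $M_{1} \subseteq M_{2}$ and $g_{2}|_{M_{1}} = g_{1}$. It is nonempty since $(\Theta', \ell) \in \mathcal{E}$, and any chain $\{(M_{\alpha}, g_{\alpha})\}$ has the upper bound $\bigl( \bigcup_{\alpha} M_{\alpha}, g \bigr)$, where $g$ agrees with $g_{\alpha}$ on each $M_{\alpha}$; this $g$ is well-defined, linear, and satisfies $g \leqq p$ precisely because the chain is totally ordered. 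A maximal element $(\tilde M, \tilde\ell)$ then must have $\tilde M = \Theta$, for otherwise the one-step lemma would produce a strictly larger element, contradicting maximality.

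Finally, from $\tilde\ell \leqq p$ on $\Theta$ together with $-\tilde\ell(\omega) = \tilde\ell(-\omega) \leqq p(-\omega) = p(\omega)$ we obtain $|\tilde\ell(\omega)| \leqq \lVert \ell \rVert_{(\Theta')^{*}}\,\lVert \omega \rVert$ for every $\omega \in \Theta$, so $\tilde\ell \in \Theta^{*}$ with $\lVert \tilde\ell \rVert_{\Theta^{*}} \leqq \lVert \ell \rVert_{(\Theta')^{*}}$; the reverse inequality is automatic since $\tilde\ell$ restricts to $\ell$, so the extension is in fact norm-preserving. The main obstacle is the one-step lemma --- specifically, seeing that subadditivity of $p$ forces the admissible interval for $\alpha$ to be nonempty; the chain bound and the maximality step are routine. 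I would also remark that when the scalar field is $\mathbb{C}$ one first applies the real argument to $\operatorname{Re}\ell$ and then recovers the complex functional via $\tilde\ell(\omega) := \tilde u(\omega) - i\,\tilde u(i\omega)$, and that when $\Theta$ is separable Zorn's lemma can be traded for an explicit induction along a countable dense sequence.
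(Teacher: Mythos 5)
Your proof is correct: the dominating seminorm $p(\omega) = \lVert \ell \rVert_{(\Theta')^{*}}\lVert\omega\rVert$, the one-step extension lemma with the sup--inf interval argument, the Zorn's lemma exhaustion, and the final bound $|\tilde\ell|\leqq p$ giving $\tilde\ell\in\Theta^{*}$ (indeed with preserved norm) constitute the standard and complete proof of the dominated/norm-preserving Hahn--Banach theorem. Note, however, that the paper itself supplies no proof of this statement --- it is quoted as a classical result and used only as an ingredient in the proof of the positive-extension version (Theorem \ref{Hahn-Banach-Positive}) --- so there is no in-paper argument to compare yours against; your write-up would fill that gap as stated.
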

    
    \begin{definition}[Riesz Space]
        A Riesz space $\mathcal{R}$ is a vector space endowed with a partial order, that is, $\forall~ x_{1},x_{2},x_{3}\in\mathcal{R}$
        \begin{enumerate}[i.]
            \item $x_{1} \leqq x_{2} \implies x_{1}+x_{3} \leqq x_{2} + x_{3}$
            \item $\forall \alpha\geqq0$, $x_{1}\leqq x_{2} \implies \alpha x_{1}\leqq \alpha x_{2}$
            \item $\exists~\sup(x_{1},x_{2})$
        \end{enumerate}
    \end{definition}
    
    \begin{definition}[Positive Linear Functional \& Sublinear]
        $\ell \in \Omega^{*}$ is \emph{positive linear} if 
        \[
            \forall ~\omega \geqq 0 \implies \ell(\omega) \geqq 0
        \]
        and is \emph{sublinear} if $\forall \omega_{1}, \omega_{2}\in\Omega$ and $\alpha\in\mathbb{R}_{0}^{+}$
        \begin{enumerate}[i.]
            \item $\ell(\omega_{1} + \omega_{2}) \leqq \ell(\omega_{1}) + \ell(\omega_{2})$
            \item $\ell(\alpha\omega_{1}) = \alpha\ell(\omega_{1})$
        \end{enumerate}
    \end{definition}
    
    \begin{theorem}[Hahn-Banach Positive Extension (\cite{Alipratis-Border2006}, Theorem 8.31, pg. 330)]\label{Hahn-Banach-Positive}
        Let $\Theta$ be a Riesz space, $\Theta'\subset\Theta$ be a Riesz subspace, and let $\ell\in (\Theta')^{*}$ be \textbf{positive linear}, then $\ell$ extends to a \textbf{positive linear functional} on all of $\Theta$ if and only if there is a monotone sublinear functional $\rho\in \Theta^{*}$ satisfying $\ell(\theta') \leqq \rho(\theta') ~\forall~ \theta'\in\Theta'$.
    \end{theorem}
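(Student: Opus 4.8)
The plan is to treat the two implications separately. The forward implication is nearly immediate: if $\ell$ admits a positive linear extension $\tilde\ell$ to all of $\Theta$, take $\rho := \tilde\ell$. A linear functional is in particular sublinear (subadditivity holds with equality, and positive homogeneity is part of linearity), and a positive linear functional on a Riesz space is automatically monotone, since $x_1 \leqq x_2$ means $x_2 - x_1 \geqq 0$, whence $\tilde\ell(x_2) - \tilde\ell(x_1) = \tilde\ell(x_2 - x_1) \geqq 0$. As $\tilde\ell$ restricts to $\ell$ on $\Theta'$, the domination $\ell(\theta') \leqq \rho(\theta')$ holds (with equality) on $\Theta'$, which is all that is required.

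For the converse, suppose a monotone sublinear $\rho \in \Theta^{*}$ with $\ell \leqq \rho$ on $\Theta'$ is given. The heart of the argument is to produce a linear functional $\tilde\ell : \Theta \to \mathbb{R}$ that extends $\ell$ and still satisfies $\tilde\ell \leqq \rho$ everywhere on $\Theta$ — this is exactly the classical Hahn–Banach extension theorem with a sublinear dominating functional, which I would either invoke directly or reprove by the usual Zorn's lemma argument: order the pairs $(\Theta'', m)$, with $\Theta' \subseteq \Theta'' \subseteq \Theta$ a subspace and $m$ linear, $m|_{\Theta'} = \ell$, and $m \leqq \rho$ on $\Theta''$, by extension; a chain has its union as an upper bound, so Zorn produces a maximal $(\Theta^{\star}, m^{\star})$. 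If $\Theta^{\star} \neq \Theta$, pick $\theta_0 \notin \Theta^{\star}$; the existence of a value $t$ for $m^{\star}(\theta_0)$ compatible with $\rho$-domination on $\Theta^{\star} \oplus \mathbb{R}\theta_0$ reduces, after dividing by the positive scalar and using positive homogeneity of $\rho$, to the sandwich inequality $\sup_{y \in \Theta^{\star}}(m^{\star}(y) - \rho(y - \theta_0)) \leqq \inf_{z \in \Theta^{\star}}(\rho(z + \theta_0) - m^{\star}(z))$, and this holds because $m^{\star}(y) + m^{\star}(z) = m^{\star}(y + z) \leqq \rho(y + z) \leqq \rho(y - \theta_0) + \rho(z + \theta_0)$ by subadditivity of $\rho$. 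This contradicts maximality, so $\Theta^{\star} = \Theta$.

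It then remains to see that the $\rho$-dominated extension $\tilde\ell$ is actually positive, and here the monotonicity of $\rho$ finally gets used. First, positive homogeneity gives $\rho(0) = \rho(0 \cdot \omega) = 0$. Now if $\theta \geqq 0$ then $-\theta \leqq 0$, so monotonicity of $\rho$ yields $\rho(-\theta) \leqq \rho(0) = 0$, and therefore $-\tilde\ell(\theta) = \tilde\ell(-\theta) \leqq \rho(-\theta) \leqq 0$, i.e. $\tilde\ell(\theta) \geqq 0$. Thus $\tilde\ell$ is the desired positive linear extension of $\ell$, completing the converse.

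The step I expect to be the real obstacle is the one-dimensional extension inside the Zorn argument — verifying the sandwich inequality — which is the genuine content of Hahn–Banach; everything else (monotone $\Leftrightarrow$ positive on a Riesz space, $\rho(0) = 0$, the concluding sign chase) is bookkeeping. If one is willing to cite the analytic Hahn–Banach theorem rather than rederive it, the proof reduces to just those two short positivity observations.
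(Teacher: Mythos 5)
Your proof is correct and follows essentially the same route as the paper: the substantive converse direction is the paper's argument verbatim (invoke the classical Hahn--Banach theorem to obtain a $\rho$-dominated linear extension, then conclude positivity from $\theta \geqq 0 \implies -\tilde\ell(\theta) = \tilde\ell(-\theta) \leqq \rho(-\theta) \leqq \rho(0) = 0$ using monotonicity of $\rho$). The only divergence is in the trivial forward direction, where you take $\rho = \tilde\ell$ itself while the paper sets $\rho(\theta) = \tilde\ell(\theta^{+})$; both choices are monotone, sublinear, and dominate $\ell$ on $\Theta'$, and your verification of these properties is if anything more explicit than the paper's.
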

    \begin{proof}
        ``$\implies$'': Let $\tilde\ell \in \Theta^{*}$ extend $\ell$ and set $\rho(\theta) = \hat\ell(\theta^{+}) = \hat\ell(\eins(\theta\geqq0)\cdot\theta)$
        
        \noindent``$\impliedby$'': Suppose $\rho:\Theta \rightarrow \mathbb{R}$ is monotone sublinear with $\ell(\theta') \leqq \rho(\theta') ~\forall~ \theta'\in\Theta'$. By the Hahn-Banach Extension Theorem $\exists~ \hat\ell \in \Theta^{*}$ extending $\ell$ and satisfying $\ell(\theta) \leqq \rho(\theta) ~\forall~ \theta\in\Theta$. Then for $\theta \geqq 0$
        \[
            -\hat\ell(\theta) = \hat\ell(-\theta) \leqq \rho(-\theta) \leqq \rho(0) = 0
        \]
        and multiplying both sides by $-1$ we obtain
        \[
            \theta\geqq0 \implies \hat\ell(\theta) \geqq 0
        \]
        hence $\hat\ell$ is a positive extension of $\ell$.
    \end{proof}
    
    \begin{theorem}[Riesz Representation (\cite{Gaans2002}, Theorem 7.3, pg. 22)] \label{RieszRepresentationMeasure}
        Let $(\Theta, d)$ be a metric space, then $\forall~\textbf{positive}~ \ell\in\mathcal{C}_{b}^{0}(\Theta)^{*} ~ \exists~ \text{tight}~ \mu\in\mathcal{P}(\mathcal{B}(\Theta,d))$ s.t.
        \[
            \ell(f) = \int\limits_{\Theta} f ~d\mu ~~\forall~ f \in \mathcal{C}_{b}(\Theta)
        \]
    \end{theorem}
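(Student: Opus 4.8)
The plan is to reconstruct a measure from $\ell$ by testing it against indicators of open and closed sets, using the metric to manufacture the continuous approximants that make this rigorous. First I would normalize: since a probability-measure representative is sought, assume $\ell(\eins_{\Theta})=1$ (otherwise rescale), and note that positivity forces $\ell$ to be monotone and bounded. The metric hypothesis is what drives everything: exactly as in the proof of Theorem~\ref{Ch1:EqualMeasuresViaBCF}, for a closed set $F$ and an open $O\supseteq F$ the map $x\mapsto \frac{d(x,\Theta\setminus O)}{d(x,\Theta\setminus O)+d(x,F)}$ is a bounded continuous ``Urysohn'' function that is $1$ on $F$ and $0$ off $O$; these are the only test functions the argument needs.

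Next I would produce a finitely additive candidate. The route suggested by the placement of Theorem~\ref{Hahn-Banach-Positive} is to take the monotone sublinear functional $\rho(g)=\inf\{\ell(f):f\in\mathcal{C}_b^0(\Theta),\ f\geqq g\}$ on the Riesz space of bounded functions on $\Theta$, check that it dominates $\ell$ on $\mathcal{C}_b^0(\Theta)$, and invoke Hahn--Banach positive extension to obtain a positive linear $\hat\ell$ on that space; then set $\mu(A)=\hat\ell(\eins_{A})$, which is automatically monotone, finitely additive, and satisfies $\mu(\Theta)=1$. Equivalently, and more directly for Borel measurability, one may build the Carath\'eodory outer measure from $\mu^{*}(U)=\sup\{\ell(f):f\in\mathcal{C}_b^0(\Theta),\ 0\leqq f\leqq \eins_{U}\}$ on open $U$ and $\mu^{*}(A)=\inf\{\mu^{*}(U):A\subseteq U\text{ open}\}$ in general. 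In either case the Urysohn functions above let one verify that every closed set, hence every set in $\mathcal{B}(\Theta,d)$, is $\mu^{*}$-measurable, so $\mu$ restricts to a set function on the Borel $\sigma$-algebra that is outer regular by open sets and inner regular by closed sets.

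Granting countable additivity, the representation $\ell(f)=\int_{\Theta} f\,d\mu$ is then routine: by linearity and an additive shift it suffices to treat $0\leqq f\leqq 1$, and one sandwiches $\ell(f)$ between lower and upper Riemann sums of $t\mapsto\mu(\{f>t\})$ built from $\eins_{\{f>t\}}$-approximating continuous functions, letting the mesh tend to $0$ and comparing with the layer-cake identity $\int_{\Theta} f\,d\mu=\int_{0}^{1}\mu(\{f>t\})\,dt$; Baire's Lemma~\ref{Ch3:Baire} is the tool that bridges the lower semicontinuous indicators $\eins_{U}$ and genuine elements of $\mathcal{C}_b^0(\Theta)$ at each step.

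The main obstacle is everything I glossed as ``granting countable additivity.'' Both constructions yield only \emph{finite} additivity for free, and a positive linear functional on $\mathcal{C}_b^0$ of an arbitrary metric space need not be $\sigma$-smooth; establishing that $\ell$ \emph{is} $\sigma$-smooth — equivalently, that the finitely additive $\mu$ is countably additive on $\mathcal{B}(\Theta,d)$ — is the genuine kernel of the theorem. Once that is in hand, Carath\'eodory's extension machinery promotes $\mu$ to a bona fide Borel probability measure, and on a Polish space Ulam's Lemma~\ref{Ch1:Ulam} then immediately yields tightness. In a bare metric space, however, this $\sigma$-smoothness must be read into the hypotheses (as the cited source does), so I expect reconciling the stated generality with the ``tight'' conclusion to be the step requiring the most care.
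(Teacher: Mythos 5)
The paper gives no proof of this theorem; it is quoted directly from van Gaans (Theorem 7.3), so there is no in-text argument to compare yours against. Judged on its own, your sketch follows the standard Riesz--Markov construction (Urysohn functions manufactured from the metric, an outer measure generated on open sets by $\mu^{*}(U)=\sup\{\ell(f): 0\leqq f\leqq \eins_{U}\}$, Carath\'eodory measurability of closed sets, and the layer-cake identity for the final representation), and each of those steps is workable; your alternative Hahn--Banach route with $\rho(g)=\inf\{\ell(f): f\geqq g\}$ is also a legitimate way to get the finitely additive candidate.

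The one genuine issue is the one you yourself isolate, and it deserves to be stated more bluntly: the statement as transcribed here is false for a general metric space. A positive linear functional on $\mathcal{C}_{b}^{0}(\Theta)$ need not be $\sigma$-smooth --- a Banach limit on $\mathcal{C}_{b}^{0}(\mathbb{N})$ is positive, linear, and normalized, yet is represented by no countably additive Borel measure, let alone a tight one. The cited Theorem 7.3 of van Gaans carries an additional hypothesis that the functional itself be tight (for every $\epsilon>0$ there is a compact $K\subset\Theta$ with $|\ell(f)|\leqq\epsilon$ whenever $\|f\|_{\infty}\leqq 1$ and $f$ vanishes on $K$); that hypothesis is precisely what upgrades your finitely additive set function to a countably additive, tight measure, and no amount of work with Urysohn functions or Hahn--Banach extension can manufacture it from positivity alone. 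So your plan cannot be completed as a proof of the statement as written; it can be completed once the omitted hypothesis is restored. For the only place the theorem is actually invoked in this paper --- the Gluing Lemma, where the underlying space $\Omega_{1}\times\Omega_{2}\times\Omega_{3}$ is assumed compact --- the difficulty evaporates: there $\mathcal{C}_{b}^{0}=\mathcal{C}^{0}$, Dini's theorem supplies $\sigma$-smoothness for free, and tightness of the resulting measure is automatic.
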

    
    \noindent{\color{red} Note:} In the following I add a \emph{compactness} assumption to $\Omega$ which has not been there up to now. As I work through \cite{Villani2009} and build the presentation for a more general approach to optimal transport, I will add similar theorems with weaker assumptions.
    
    \begin{theorem}[$\mathcal{W}_{p}$ is a metric on $\mathscr{P}(\mathcal{F})$]\label{Ch3:WassersteinAsMetric}
        Let $(\Omega, \mathcal{F}, d)$ be a measurable compact Polish metric space, then $\mathcal{W}_{p}$ is a metric on $\mathscr{P}(\mathcal{F})$.
    \end{theorem}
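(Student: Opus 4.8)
The plan is to verify the four metric axioms on $\mathscr{P}(\mathcal{F})$ one at a time, using Theorem~\ref{CH3:OptimalCouplingExistence} to supply optimal couplings whenever convenient. Observe first that for $c = d^{p}$ the cost is continuous on $\Omega_{1}\times\Omega_{2}$ (the metric is continuous for the product topology and $t\mapsto t^{p}$ is continuous), hence lower semicontinuous and nonnegative, so Theorem~\ref{CH3:OptimalCouplingExistence} applies with $a_{1}=a_{2}=0$ and an optimal plan realizing $\mathcal{W}_{p}(\mu_{1},\mu_{2})$ always exists. Nonnegativity is immediate since $d^{p}\geqq 0$, and \emph{finiteness} is where compactness enters: $\Omega$ compact and $d$ continuous give $D:=\sup_{x,y}d(x,y)<\infty$, so $\mathcal{W}_{p}(\mu_{1},\mu_{2})\leqq D$ using any coupling, e.g.\ $\mu_{1}\otimes\mu_{2}$.

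Next, the identity of indiscernibles. If $\mu_{1}=\mu_{2}=:\mu$, take the diagonal coupling $\pi=(\mathrm{id},\mathrm{id})_{\#}\mu$, which has the right marginals and gives $\int d^{p}(x,y)\,d\pi=\int d^{p}(x,x)\,d\mu(x)=0$, so $\mathcal{W}_{p}(\mu,\mu)=0$. Conversely, if $\mathcal{W}_{p}(\mu_{1},\mu_{2})=0$, pick an optimal $\pi$; then $\int d^{p}\,d\pi=0$ with $d^{p}\geqq 0$ forces $\pi$ to be concentrated on the closed set $\{(x,y):d(x,y)=0\}=\{(x,y):x=y\}$, whence $\pi=(\mathrm{id},\mathrm{id})_{\#}\mu_{1}$ and reading off the second marginal gives $\mu_{2}=\mu_{1}$. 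Symmetry is clear: the swap $(x,y)\mapsto(y,x)$ maps $\Pi(\mu_{1},\mu_{2})$ bijectively onto $\Pi(\mu_{2},\mu_{1})$ and preserves $\int d^{p}\,d\pi$ because $d(x,y)=d(y,x)$.

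The triangle inequality is the real content. Given $\mu_{1},\mu_{2},\mu_{3}$ and optimal plans $\pi_{12}\in\Pi(\mu_{1},\mu_{2})$, $\pi_{23}\in\Pi(\mu_{2},\mu_{3})$, I would invoke the \emph{gluing lemma}: disintegrating $\pi_{12}$ and $\pi_{23}$ with respect to their common marginal $\mu_{2}$ (legitimate since $\Omega$ is Polish) produces a probability measure $\gamma$ on $\Omega_{1}\times\Omega_{2}\times\Omega_{3}$ with $(1,2)$- and $(2,3)$-marginals equal to $\pi_{12}$ and $\pi_{23}$. Its $(1,3)$-marginal $\pi_{13}$ then lies in $\Pi(\mu_{1},\mu_{3})$, and writing $X_{i}$ for the coordinate maps on $(\Omega^{3},\gamma)$, the ordinary triangle inequality for $d$ together with Minkowski's inequality in $L^{p}(\gamma)$ gives
\begin{align*}
    \mathcal{W}_{p}(\mu_{1},\mu_{3})
        &\leqq \left(\int d^{p}(X_{1},X_{3})\,d\gamma\right)^{1/p}
        \leqq \left(\int \big(d(X_{1},X_{2})+d(X_{2},X_{3})\big)^{p}\,d\gamma\right)^{1/p}\\
        &\leqq \left(\int d^{p}(X_{1},X_{2})\,d\gamma\right)^{1/p} + \left(\int d^{p}(X_{2},X_{3})\,d\gamma\right)^{1/p}
        = \left(\int d^{p}\,d\pi_{12}\right)^{1/p} + \left(\int d^{p}\,d\pi_{23}\right)^{1/p}.
\end{align*}
Taking the infimum over $\pi_{12}$ and $\pi_{23}$ yields $\mathcal{W}_{p}(\mu_{1},\mu_{3})\leqq\mathcal{W}_{p}(\mu_{1},\mu_{2})+\mathcal{W}_{p}(\mu_{2},\mu_{3})$.

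The main obstacle is the gluing step: one must justify the existence of the disintegrations (regular conditional probabilities) of $\pi_{12}$ and $\pi_{23}$ given the middle coordinate and verify that the resulting $\gamma$ genuinely has the claimed bivariate marginals — this is precisely where Polishness is indispensable, and where I would cite a disintegration theorem. The remaining ingredients — Minkowski's inequality, the diagonal-coupling computations, the transposition argument — are routine. One minor point to handle cleanly is that a priori the infimum need not be attained; here Theorem~\ref{CH3:OptimalCouplingExistence} (or, failing that, working with $\varepsilon$-optimal plans in the gluing step) removes the worry.
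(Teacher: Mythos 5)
Your proposal is correct, and the overall skeleton (diagonal coupling for $\mathcal{W}_p(\mu,\mu)=0$, concentration of an optimal plan on the diagonal for the converse, gluing plus Minkowski for the triangle inequality) matches the paper's. The one genuinely different ingredient is how you obtain the glued measure $\gamma$: you propose disintegrating $\pi_{12}$ and $\pi_{23}$ along the common marginal $\mu_{2}$ and citing a disintegration theorem for Polish spaces, whereas the paper proves its Gluing Lemma (Lemma~\ref{Ch4:Gluing}) by a functional-analytic route --- defining a positive linear functional on the subspace $V=\{\phi_{12}+\phi_{23}\}\subset\mathcal{C}^{0}_{b}(\Omega^{3})$, extending it by the Hahn--Banach positive extension theorem (Theorem~\ref{Hahn-Banach-Positive}), and realizing the extension as a measure via Riesz representation (Theorem~\ref{RieszRepresentationMeasure}). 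That Riesz step is precisely why the paper adds the compactness hypothesis to $\Omega$; your disintegration argument needs only Polishness, so it would in fact let you drop compactness from the statement (retaining it only if you want finiteness of $\mathcal{W}_p$ for free via $\sup d<\infty$, as you note). Two smaller divergences: you conclude $\mu_{1}=\mu_{2}$ by identifying $\pi=(\mathrm{id},\mathrm{id})_{\#}\mu_{1}$ directly, while the paper routes through test functions $u\in\mathcal{C}^{0}_{b}(\Omega)$ and Theorem~\ref{Ch1:EqualMeasuresViaBCF}; and your derivation of ``$\int d^{p}\,d\pi=0$ forces concentration on $\{x=y\}$'' from nonnegativity is actually cleaner than the paper's contradiction argument, which as written does not quite produce a strictly positive lower bound on $\int_{F}d^{p}\,d\pi$ without first restricting to a set where $d\geqq\delta>0$. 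Both approaches to gluing are standard; the paper's buys self-containedness at the price of compactness, yours buys generality at the price of quoting the disintegration theorem.
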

    
    \begin{lemma}[Gluing Lemma]\label{Ch4:Gluing}
        Let $(\Omega_{i}, \mathcal{F}_{i}, \mu_{i}) ~(i=1,2,3)$ be a compact measured Polish space with associated transport plans $\pi_{12}\in\Pi(\mu_{1},\mu_{2})$ and $\pi_{23}\in\Pi(\mu_{2},\mu_{3})$, then $\exists~ \pi_{123} \in \mathscr{P}(\mathcal{F}_{1}\times\mathcal{F}_{2}\times\mathcal{F}_{3})$ with marginals $\pi_{12}$ and $\pi_{23}$.
    \end{lemma}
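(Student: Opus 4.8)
The plan is to construct $\pi_{123}$ by \emph{disintegration}: I glue $\pi_{12}$ and $\pi_{23}$ along their common marginal $\mu_2$, forcing the first and third coordinates to be conditionally independent given the second. Since $\Omega_1$ is a (compact) Polish space and $\mu_2$ is exactly the second marginal of $\pi_{12}$, the disintegration theorem supplies a $\mu_2$-a.e.\ unique measurable family $(K_{x_2})_{x_2\in\Omega_2}$ of probability measures on $\mathcal{F}_1$ with
\[
    \pi_{12}(A\times B) = \int_{B} K_{x_2}(A)\, d\mu_2(x_2) \qquad \forall\, A\in\mathcal{F}_1,\ B\in\mathcal{F}_2.
\]
Symmetrically, disintegrating $\pi_{23}$ against its \emph{first} marginal $\mu_2$ yields a measurable family $(L_{x_2})_{x_2\in\Omega_2}$ of probability measures on $\mathcal{F}_3$ with $\pi_{23}(B\times C) = \int_{B} L_{x_2}(C)\, d\mu_2(x_2)$ for all $B\in\mathcal{F}_2,\ C\in\mathcal{F}_3$.

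Next I would define $\pi_{123}$ on $\Omega_1\times\Omega_2\times\Omega_3$ through the positive linear functional
\[
    \ell(f) \;=\; \int_{\Omega_2}\!\left( \int_{\Omega_1}\!\int_{\Omega_3} f(x_1,x_2,x_3)\, dL_{x_2}(x_3)\, dK_{x_2}(x_1) \right) d\mu_2(x_2), \qquad f\in\mathcal{C}_b^0(\Omega_1\times\Omega_2\times\Omega_3),
\]
the inner integrand being measurable in $x_2$ because the families $K_\bullet,L_\bullet$ are measurable. Then $\ell$ is linear, positive, bounded by $\|f\|_\infty$, and $\ell(\eins)=\mu_2(\Omega_2)=1$ since each $K_{x_2}$ and $L_{x_2}$ is a probability measure; so by the Riesz Representation Theorem \ref{RieszRepresentationMeasure} there is a tight probability measure $\pi_{123}\in\mathscr{P}(\mathcal{F}_1\times\mathcal{F}_2\times\mathcal{F}_3)$ with $\ell(f)=\int f\, d\pi_{123}$. (Equivalently, one may define $\pi_{123}(A\times B\times C)=\int_B K_{x_2}(A)L_{x_2}(C)\,d\mu_2(x_2)$ on rectangles and extend by Carath\'{e}odory.) Checking the two marginal conditions is then a short test-function computation: for $f(x_1,x_2,x_3)=g(x_1,x_2)$ the inner $x_3$-integral collapses to $g(x_1,x_2)L_{x_2}(\Omega_3)=g(x_1,x_2)$, leaving $\int_{\Omega_2}\int_{\Omega_1} g\, dK_{x_2}\, d\mu_2=\int g\, d\pi_{12}$ by the defining property of $(K_{x_2})$, so Theorem \ref{Ch1:EqualMeasuresViaBCF} identifies the $(\Omega_1\times\Omega_2)$-marginal of $\pi_{123}$ with $\pi_{12}$; the symmetric computation with $f(x_1,x_2,x_3)=g(x_2,x_3)$ and $(L_{x_2})$ identifies the $(\Omega_2\times\Omega_3)$-marginal with $\pi_{23}$.

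I expect the main obstacle to be the \emph{existence and measurability of the disintegrations} $(K_{x_2})$ and $(L_{x_2})$, i.e.\ the existence of regular conditional probabilities; this is precisely where the Polish (here even compact Polish) hypothesis is essential, and I would invoke it as a cited measure-theoretic fact in the spirit of the appeals to Ulam's and Prokhorov's theorems above. Everything afterward — well-definedness of $\pi_{123}$, the Riesz step, and the marginal verifications — is routine. As an alternative route that sidesteps an explicit use of Riesz, one can realize $\pi_{12}$ and $\pi_{23}$ by random variables $(X_1,X_2)$ and $(X_2',X_3)$ sharing the law $\mu_2$ on the middle factor and build a triple $(Z_1,Z_2,Z_3)$ on a common probability space with $Z_2\sim\mu_2$ and $Z_1,Z_3$ drawn conditionally independently given $Z_2$ from $K_{Z_2}$ and $L_{Z_2}$; then $\pi_{123}:=\mathrm{law}(Z_1,Z_2,Z_3)$ works, with the marginal identities immediate from the construction.
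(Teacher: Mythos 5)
Your proof is correct, but it takes a genuinely different route from the paper. The paper works dually: it defines a positive linear functional $G$ on the subspace $V=\{\phi_{12}(x_1,x_2)+\phi_{23}(x_2,x_3)\}\subset\mathcal{C}_b^0(\Omega_1\times\Omega_2\times\Omega_3)$ by integrating each summand against the corresponding plan, checks that $G$ is well defined (the two decompositions of an element of $V$ can differ only by a function of $x_2$) and positive, extends it to all of $\mathcal{C}_b^0$ by the positive Hahn--Banach theorem (Theorem \ref{Hahn-Banach-Positive}, with $\rho=\sup$), and only then produces $\pi_{123}$ via Riesz representation (Theorem \ref{RieszRepresentationMeasure}); the compactness hypothesis is what makes this duality step clean. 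You instead build $\pi_{123}$ directly and explicitly by disintegrating $\pi_{12}$ and $\pi_{23}$ along their shared marginal $\mu_2$ and taking the conditionally independent product $\pi_{123}(A\times B\times C)=\int_B K_{x_2}(A)L_{x_2}(C)\,d\mu_2(x_2)$. The one nontrivial input you rely on is the existence and measurability of regular conditional probabilities, which is exactly where the Polish hypothesis enters, and you correctly flag this as the point to cite. What each approach buys: yours is the standard modern proof, needs no compactness, yields an explicit formula for $\pi_{123}$ (making the marginal checks a one-line test-function computation rather than an appeal to uniqueness of Riesz representatives), and even gives the probabilistic construction $(Z_1,Z_2,Z_3)$ for free; the paper's is more self-contained relative to the functional-analytic toolkit it has already set up, avoids the disintegration theorem entirely, but pays for that with the compactness assumption and with the well-definedness and positivity verifications for $G$. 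Both are complete proofs of the stated lemma.
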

    \begin{proof}
        Let $V\subset C^{0}_{b}(\Omega_{1}\times\Omega_{2}\times\Omega_{3})$ be the vector space
        \[
            V = \{ \phi_{12}(x_{1},x_{2})+\phi_{23}(x_{2},x_{3}) : \phi_{12} \in C^{0}_{b}(\Omega_{1}\times\Omega_{2}), \phi_{23} \in C^{0}_{b}(\Omega_{2}\times\Omega_{3}) \}
        \]
        and define a functions $G : V \rightarrow \mathbb{R}$ by
        \[
            G(\phi_{12}+\phi_{23}) = \int\limits_{\Omega\times\Omega} \phi_{12} ~d\pi_{12} + \int\limits_{\Omega\times\Omega} \phi_{23} ~d\pi_{23}
        \]
        We will now show that $G$ is well defined. Let $\phi_{12} + \phi_{23} = \hat\phi_{12} + \hat\phi_{23}$, lets perturb $x_{1}$ by $\Delta x_{1}$
        \begin{align*}
          \phi_{12}(x_{1}+\Delta x_{1}, x_{2}) - \tilde\phi_{12}(x_{1}+\Delta x_{1}, x_{2}) 
            &= \tilde\phi_{23}(x_{2},x_{3}) - \phi_{23}(x_{2},x_{3})\\
            &= \phi_{12}(x_{1}, x_{2}) - \tilde\phi_{12}(x_{1}, x_{2})
        \end{align*}
        and similarly for $x_{3}$ we obtain, with the equality restriction, that $\phi_{12}(x_{1}, x_{2}) - \tilde\phi_{12}(x_{1}, x_{2})$ and $\tilde\phi_{23}(x_{2},x_{3}) - \phi_{23}(x_{2},x_{3})$ are functions of $x_{2}$. Thus
        \begin{align*}
            \int\limits_{\Omega\times\Omega} \phi_{12} - \tilde\phi_{12} ~d\pi_{12} 
            &= \int\limits_{\Omega} \phi_{12} - \tilde\phi_{12} ~d\mu_{2}\\
            &= \int\limits_{\Omega} \phi_{23} - \tilde\phi_{23} ~d\mu_{2}\\
            &= \int\limits_{\Omega\times\Omega} \phi_{23} - \tilde\phi_{23} ~d\pi_{23}\\
        \end{align*}
        and, by rearranging, we obtain
        \[
            G(\phi_{12}+\phi_{23}) = G(\tilde\phi_{12}+\tilde\phi_{23}) 
        \]
        and so $G$ is well defined. Clearly $G$ is bounded and linear (its an integral), we must show that it is \emph{positive} linear. Let
        \[
            \phi_{12}(x_{1},x_{2})+\phi_{23}(x_{2}.x_{3}) \geqq 0
        \] 
        then, with both sides being functions of $x_{2}$, we have
        \[
            \phi_{12}(x_{1},x_{2}) \geqq -\phi_{23}(x_{2},x_{3}) \geqq -\inf\limits_{x_{3}}\phi_{23}(x_{2},x_{3})
        \]
        and that the infimum exists since $\phi_{23}$ is bounded. We now have
        \[
        \begin{cases}
            \displaystyle\int\limits_{\Omega_{1}\times\Omega_{2}} \phi_{12} ~d\pi_{12} \geqq \displaystyle\int\limits_{\Omega_{1}\times\Omega_{2}} -\inf\limits_{x_{3}}\phi_{23}(x_{2},x_{3}) ~d\pi_{12} \geqq -\displaystyle\int\limits_{\Omega_{2}} \inf\limits_{x_{3}}\phi_{23}(x_{2},x_{3}) ~d\mu_{2}\\
            
            ~\\
            
            \displaystyle\int\limits_{\Omega_{2}\times\Omega_{3}} \phi_{23} ~d\pi_{23} \geqq \displaystyle\int\limits_{\Omega_{2}\times\Omega_{3}} \inf\limits_{x_{3}}\phi_{23}(x_{2},x_{3}) ~d\pi_{23} \geqq \displaystyle\int\limits_{\Omega_{2}} \inf\limits_{x_{3}}\phi_{23}(x_{2},x_{3}) ~d\mu_{2}
        \end{cases}
        \]
        Using the above lower bounds, we obtain
        \begin{align*}
            G(\phi_{12}+\phi_{23}) 
                &= \int\limits_{\Omega_{1}\times\Omega_{2}} \phi_{12} ~d\pi_{12} + \int\limits_{\Omega_{2}\times\Omega_{3}} \phi_{23} ~d\pi_{23}\\
                &\geqq -\displaystyle\int\limits_{\Omega_{2}} \inf\limits_{x_{3}}\phi_{23}(x_{2},x_{3}) ~d\mu_{2} + \displaystyle\int\limits_{\Omega_{2}} \inf\limits_{x_{3}}\phi_{23}(x_{2},x_{3}) ~d\mu_{2}\\
                &= 0
        \end{align*}
        Thus, $G$ is a positive linear functional. So, by Theorem \ref{Hahn-Banach-Positive} (Hahn-Banach Positive Extension)[with $\Theta = \mathcal{C}_{b}^{0}(\Omega_{1}\times\Omega_{2}\times\Omega_{3})$ and $\Theta' = V$, and $\rho(\bullet)=\sup\limits_{x\in\Omega}(\bullet)$ in the definition], $\exists~ \hat G: \mathcal{C}_{b}^{0}(\Omega_{1}\times\Omega_{2}\times\Omega_{3}) \rightarrow \mathbb{R}$, and by the Theorem \ref{RieszRepresentationMeasure} (Riesz representation) we have $\exists~ \pi_{123}\in\mathscr{P}(\mathcal{F}_{1}\times\mathcal{F}_{2}\times\mathcal{F}_{3})$ corresponding to $\hat{G}$ yielding
        \begin{align*}
            \int\limits_{\Omega^{\otimes3}} \phi_{12} + \phi_{23} ~d\pi_{123} 
            &= \hat{G}(\phi_{12}+\phi_{23})\\
            &= G(\phi_{12}+\phi_{23})\\
            &= \int\limits_{\Omega\times\Omega} \phi_{12} ~d\pi_{12} + \int\limits_{\Omega\times\Omega} \phi_{23} ~d\pi_{23} ~~~~\forall~ (\phi_{12}+\phi_{23}) \in V
        \end{align*}
        and we have, from Theorem \ref{Ch1:EqualMeasuresViaBCF}, that $\pi_{123}$ has marginals $\pi_{12}$ and $\pi_{23}$ as desired.
    \end{proof}
    
    \begin{proof}[Proof of Theorem \ref{Ch3:WassersteinAsMetric}]
        We must show that $\mathcal{W}_{p}$ satisfies the properties of a metric in Definition \ref{Ch1:Metric}. It is clear that $\mathcal{W}_{p}$ is non-negative, symmetric, and finite (since the infimum is achieved). Now, suppose $\mu_{1}=\mu_{2}=\mu$, then there exists a random variable $X: (\Omega_{*},\mathcal{F}_{*},\mathbb{P}) \rightarrow (\Omega,\mathcal{F})$ such that $\mu=X_{\#}\mathbb{P}$, then, with $X_{1}=X_{2}=X$ in the definition of the Kantorovich problem, we obtain
        \[
            \int\limits_{\Omega_{*}} d^{p}(X(\omega),X(\omega)) ~d\mathbb{P}(\omega) = 0
        \]
        so $\mathcal{W}_{p}(\mu,\mu) = 0 ~\forall~\mu\in\mathscr{P}(\mathcal{F})$. Now let $\mu_{1}, \mu_{2} \in \mathscr{P}(\mathcal{F})$ (not necessarily equal), if $\mathcal{W}_{p}(\mu_{1},\mu_{2}) = 0$, then $\pi^{*}$ must concentrate all of its mass on the diagonal $\Delta_{\Omega} \subset \Omega\times\Omega$; suppose it didn't, then $\exists F \in \mathcal{F}\times\mathcal{F}\left|_{ \Delta_{\Omega}^{c}}\right.$ s.t. $\pi^{*}(F) > 0$ and $\sup\limits_{(x_{1},x_{2})\in F}(d(x_{1},x_{2})) > 0$, so we have 
        \[
            \left(\int\limits_{F}d^{p}(x_{1},x_{2}) ~d\pi \right)^{\frac{1}{p}} \leqq \mathcal{W}_{p}(\mu_{1}, \mu_{2})
        \]
        which contradicts $\mathcal{W}_{p}(\mu_{1},\mu_{2}) = 0$. With this, we have that $\forall~ u\in \mathcal{C}^{0}_{b}(\Omega)$
        \[
            \int\limits_{\Omega} u(x) ~d\mu_{1}(x) = \int\limits_{\Omega\times\Omega} u(x) ~d\pi(x,y) = \int\limits_{\Omega\times\Omega} u(y) ~d\pi(x,y) = \int\limits_{\Omega} u(y) ~d\mu_{2}(y)
        \]
        and thus we have $\mu_{1} = \mu_{2}$ by Theorem \ref{Ch1:EqualMeasuresViaBCF}.\\
        \indent Now let $\mu_{i}\in\mathscr{P}(\mathcal{F}) ~(i=1,2,3), \pi_{12}\in\mathbb{K}(\mu_{1},\mu_{2}), \pi_{23}\in\mathbb{K}(\mu_{2},\mu_{3})$, and, by the Lemma \ref{Ch4:Gluing}, $\pi_{123}\in\mathscr{P}(\mathcal{F}^{\otimes 3})$ coupling $\pi_{12}$ and $\pi_{23}$. Letting $\pi_{13}(\bullet,\bullet) = \pi_{123}(\bullet,\Omega,\bullet)$ (not necessarily optimal), we obtain
        \begin{align*}
            \mathcal{W}_{p}(\mu_{1},\mu_{3}) 
                &\leqq \left(~ \int\limits_{\Omega\times\Omega} d^{p}(x_{1},x_{2}) ~d\pi_{13} \right)^{\frac{1}{p}}\\
                & = \left(~\int\limits_{\Omega^{\otimes 3}} d^{p}(x_{1},x_{3}) ~d\pi_{123} \right)^{\frac{1}{p}}\\
                & \leqq \left(~\int\limits_{\Omega^{\otimes 3}} \left[~ d(x_{1},x_{2}) + d(x_{2},x_{3}) ~\right]^{p} ~d\pi_{123} \right)^{\frac{1}{p}}\\
                & \leqq \left( \int\limits_{\Omega^{\otimes 3}} d^{p}(x_{1},x_{2}) ~d\pi_{123} \right)^{\frac{1}{p}} + \left(~ \int\limits_{\Omega^{\otimes 3}} d^{p}(x_{2},x_{3}) ~d\pi_{123} \right)^{\frac{1}{p}}\\
                & = \left(~ \int\limits_{\Omega\times\Omega} d^{p}(x_{1},x_{2}) ~d\pi_{12} \right)^{\frac{1}{p}} + \left(~ \int\limits_{\Omega\times\Omega} d^{p}(x_{2},x_{3}) ~d\pi_{23} \right)^{\frac{1}{p}}\\
                & = \mathcal{W}_{p}(\mu_{1},\mu_{2}) + \mathcal{W}_{p}(\mu_{2},\mu_{3})
        \end{align*}
        which proves the triangle inequality.
    \end{proof}

\break


\begin{thebibliography}{9}
    \bibitem{Villani2003} 
    C. Villani, \textit{Topics in Optimal Transport}. American Mathematical Society, 2003. ISBN: 0-8218-3312-X

    \bibitem{Villani2009} 
    C. Villani, \textit{Optimal Transport: Old and New}. Berlin, Heidelberg: Springer Berlin Heidelberg, 2009. DOI: 10.1007/978-1-4471-5361-0 Available: \href{https://ljk.imag.fr/membres/Emmanuel.Maitre/lib/exe/fetch.php?media=b07.stflour.pdf}{\color{blue}https://ljk.imag.fr/membres/Emmanuel.Maitre/lib/exe/fetch.php?media=\\b07.stflour.pdf}

    \bibitem{Klenke2014} 
    A. Klenke, \textit{Probability Theory: A Comprehensive Course}. London: Springer, 2014. DOI: 10.1007/978-1-4471-5361-0 
    
    \bibitem{Orbanz2016} 
    P. Orbanz, \textit{Probability Theory II}. London: \href{http://www.gatsby.ucl.ac.uk/~porbanz/teaching/G6106S16/NotesG6106S16_9May16.pdf}{\textcolor{blue}{Gatsby Computational Neuroscience Unit at UCL}}, 2016.
    
    \bibitem{Gaans2002} 
    O. van Gaans, \textit{Probability measures on metric spaces}. Netherlands: \href{https://www.math.leidenuniv.nl/~vangaans/jancol1.pdf}{\textcolor{blue}{Universiteit Leiden}}.
    
    \bibitem{Alipratis-Border2006} 
    C. Aliprantis, K. Border, \textit{Infinite Dimensional Analysis: A Hitchhiker's Guide}. Springer, 2006. ISBN: 978-3540295860.
    
    \bibitem{Chodosh2012}
    O. Chodosh \textit{Optimal Transport and Ricci Curvature: Wasserstein Space Over the Interval}. arXiv:1105.2883v2, 2012.
    
    \bibitem{Munkres2000}
    J. Munkres \textit{Topology}. Prentice Hall, 2000. ISBN: 0-13-181629-2
    
    
    \end{thebibliography}
\end{document}